\newtheorem{thm}{Theorem}
\newtheorem{remark}{Remark}
\newtheorem{lemma}{Lemma}[section]
\newtheorem{definition}{Definition}[section]
\newenvironment{proof}[1][Proof]{\textbf{#1.} }{\hfill $\square$}
\newcommand{\al}{\alpha}
\newcommand{\eps}{\varepsilon}
\newcommand{\mA}{\mathbf{A}}
\newcommand{\mP}{\mathbf{P}}
\title{Homogenization of random Navier-Stokes-type system for
electrorheological fluid }
\author{
Andrey Piatnitski \thanks{Faculty of Technology, Narvik University College, Norway \  and \ Lebedev Physical Institute RAS, Moscow, Russia\hfill\break
e-mail: \ {\tt andrey@sci.lebedev.ru}}
\and  Vasily  Zhikov \thanks{Vladimir State University, Vladimir, Russia \hfill\break
e-mail: \ {\tt zhikov@vlsu.ru}}
}
\date{\today}
\begin{document}
\maketitle

\begin{abstract}
The paper deals with homogenization of Navier-Stokes-type system
describing electrorheologial fluid with random characteristics.
Under non-standard growth conditions we construct the homogenized model
and prove the convergence result. The structure of the limit equations is also studied.
\end{abstract}


\section{Introduction}

Rheological properties of some fluids might
change essentially in the presence of an electromagnetic
field. For such fluids the viscous stress tensor is not only a nonlinear function of the deformation
velocity tensor, it also depends on the spatial argument.
A collection of interesting experimental data as well as a number of mathematical models
of electrorheological fluids can be found in \cite{Ruz}.

In this work we assume that the driving electromagnetic field has a random statistically homogeneous
microstructure.  Then the viscous stress tensor of the fluid is getting a random rapidly oscillating function of
the spatial variables.
The corresponding system of equations takes the form (the so-called generalized Naviers-Stokes
equations)
\begin{equation}\label{n-s_ini}
\left\{
\begin{array}{l}\displaystyle
\frac{\partial u^\eps}{\partial t}-{\rm div}\Big( A\big(\frac{x}{\eps}, Du^\eps\big)\Big)+{\rm div}(u^\eps\otimes u^\eps)+\nabla \pi=0,\quad \hbox{in } G\!\times\!
(0,T),\\[3mm]
{\rm div}\,u^\eps=0,\qquad u^\eps|_{\partial G}=0,\qquad u|_{t=0}=u_0,
\end{array}
\right.
\end{equation}
where the viscous stress tensor $A(y,\xi)$ satisfies non-standard $p(\cdot)$-growth conditions which are specified in details in the next Section.
¤
In (\ref{n-s_ini}) $u^\eps$ denotes the fluid velocity field and $Du^\eps$ stands for its symmetrized gradient, $\pi$ is the pressure, ${\rm div}(u^\eps\otimes u^\eps)$ is the nonlinear convective term,
 and $ A(x, Du^\eps)$ is the viscosity stress tensor of the fluid; $\eps$ is a small positive parameter that characterizes the microscopic length scale.

The goal of this work is to study the limit behaviour of $u^\eps$ as $\eps\to 0$. We assume that  $A(y,\xi)$ is a symmetric matrix being  a random ergodic statistically homogeneous function of $y\in\mathbb R^d$. In particular, the exponent $p(y)$ that characterize the growth conditions of  $A(y,\xi)$ might be
a random statistically homogeneous function.  Under a monotonicity assumption and certain conditions on $p$, we construct the effective model and prove
the homogenization result. We show in particular that the homogenized system is deterministic.

Similar results in the periodic framework have been obtained in \cite{Zh11}.  Qualitative theory of a generalized Navier-Stokes system were developed
in  \cite{DeRuWo10} and \cite{Zh09_fa}.

Our approach relies on a priori estimates, monotonicity arguments, generalized div-curl Lemma and ergodic theorems.

 \section{Problem setup}\label{sec_pbmsetup}

Given a Lipschitz bounded domain $G$ in $\mathbb R^d$ we study  initial-boundary problem (\ref{n-s_ini})
in $Q_T=G\times [0,T]$ for a fixed $T>0$.

Let $(\Omega,\mathcal{F},{\bf P})$ be a standard probability space with a measure preserving dynamical system $\tau_y$, $y\in\mathbb R^d$.
We recall that $\tau_y$  is a group of measurable mappings $\tau_y\,:\,\Omega\mapsto\Omega$ such that
\begin{itemize}
\item $\tau_{y_1+y_2}=\tau_{y_1}\circ\tau_{y_2}$,\qquad $\tau_0=Id$.
\item ${\bf P}(\tau_y(\mathcal{Q}))={\bf P}(\mathcal{Q})$ for any $\mathcal{Q}\in\mathcal{F}$ and any $y\in\mathbb R^n$.
\item $\tau\,:\,\Omega\times\mathbb R^n\mapsto\Omega$ is measurable; we assume here that $\mathbb R^d$ is equipped with the Borel
$\sigma$-algebra.
\end{itemize}
In what follows we assume that the dynamical system $\tau_\cdot$ is ergodic that is any function which is invariant with respect to $\tau_\cdot$ is
equal to a constant almost surely (a.s.).

We also assume that $\Omega$ is a compact metric space and that $\tau$ is continuous with respect to this topology.

Now we set
$$
A(y,\xi)= {\bf A}(\tau_y\omega,\xi)
$$
where ${\bf A}={\bf A}(\omega,\xi)$ possesses the following properties:
\begin{itemize}
\item[{\bf h1.}]   ${\bf A}\,:\,\Omega\times{\bf M}\mapsto{\bf M}$, where ${\bf M}$  is the space of symmetric $d\times d$-matrices which is identified with
$\mathbb R^{\frac{d(d+1)}{2}}$. We assume that  ${\bf A}$ is a Carath\'eodory function, that is ${\bf A}$ is  continuous
in $\xi$ for almost all $\omega\in\Omega$ and measurable in $\omega$ for any $\xi$.
\item[{\bf h2.}]
For all $\omega\in\Omega$ and $\xi_1\not=\xi_2$
$$
\big({\bf A}(\omega,\xi_1)-{\bf A}(\omega,\xi_2),\xi_1-\xi_2\big)>0.
$$
\item[{\bf h3.}]
There exists $c_0>0$ such that
$$
\big({\bf A}(\omega,\xi),\xi\big)\geq c_0|\xi|^{p(\omega)}-(c_0)^{-1}.
$$
\item[{\bf h4.}]
There exists $c_1>0$ such that
$$
\big|{\bf A}(\omega,\xi)\big|^{p'(\omega)}\leq c_1|\xi|^{p(\omega)}+c_1, \qquad p'(\omega)=\frac{p(\omega)}{p(\omega)-1},
$$
where the random variable  $p(\omega)$ satisfies the following estimates:
\begin{equation}\label{exp_bouu}
1<\alpha\leq p(\omega)\leq \beta<\infty.
\end{equation}
\end{itemize}

\subsection{Functional spaces}\label{ss_func-sp}

We introduce here several functional spaces. We denote
$$
C_{0,{\rm sol}}^\infty(G)=\{\psi\in C_0^\infty(G;\mathbb R^d)\,,\,\mathrm{div} \psi=0\},
$$
and $H$ is the closure of $C_{0,{\rm sol}}^\infty(G)$ in $L^2(G\,;\,\mathbb R^d)$ norm. We also define
$X^\eps$ as the closure of the space
$ C^\infty([0,T]; C_{0,{\rm sol}}^\infty(G))$ in the Luxemburg norm
$$
\|D\psi\|_{L^{p_\eps}(Q_T)}=\inf\Big\{\lambda>0\,:\,   \int_{Q_T}\big|\lambda^{-1}D \psi\big|^{p_\eps(x)}\,dxdt\leq 1\Big\};
$$
here $Q_T=G\times(0,T)$ and $p_\eps(x)=p(\tau_{x/\eps}\omega)$. Observe that the space $X^\eps$  depends on $\omega$.

\medskip
We say that a vector function $u\in X^\eps\cap L^\infty((0,T);H)$ is a {\em weak solution} of problem (\ref{n-s_ini})
if
\begin{itemize}
\item[(i)]
for any $\varphi\in C_{0,{\rm sol}}^\infty$ and for any $t',t''\in[0,T]$ the relation holds
$$
\int_G [u(x,t'')-u(x,t')]\cdot\varphi(x)\,dx+\int_{t'}^{t''}\!\!\int_G\Big[A\big(\frac{x}{\eps},Du\big)-u\otimes u\Big]
\cdot D\varphi\,dxdt=0;
$$
\item[(ii)]
$$
\lim\limits_{t\to+0} \int_G u(x,t)\cdot\varphi(x)\,dx=\int_G u_0(x)\cdot\varphi(x)\,dx
$$
\item[(iii)]
the  energy inequality
$$
\frac{1}{2}
\int_G [u(x,t'')\cdot u(x,t'')-u(x,t')\cdot u(x,t')]\,dx+\int_{t'}^{t''}\!\!\int_G A\Big(\frac{x}{\eps},Du\Big)\cdot D u\,dxdt\leq 0
$$
holds for almost all $t',\, t''\in[0,T]$.
\end{itemize}
Notice that from the definition of a solution it follows that $(u(\cdot, t),\varphi)$ is a continuous function of $t$ for any $\varphi\in C_{0,{\rm sol}}^\infty$.
In other words, $u(\cdot,t)$  is a
weakly continuous function of $t$ with values in $H$. However, it does not imply the energy equality. The theory admits
the strict energy inequality, which means the violation of energy conservation law.

\bigskip
The following statement has been proved in \cite{Zh09_fa}.

\begin{thm}\label{t_exi}
Assume that
$$
\alpha\geq \alpha_0(d)=\max\left\{\frac{d+\sqrt{3d^2+4d}}{d+2}, \frac{3d}{d+2}\right\}
$$
and
$$
\alpha\le p(x)\le \beta<\infty.
$$
Then generalized Navier-Stokes system (\ref{n-s_ini}) has a weak solution for any $u_0\in H$.
\end{thm}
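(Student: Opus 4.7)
The plan is a Galerkin approximation combined with monotonicity for the viscous term and compactness for the convective nonlinearity; $\eps>0$ is treated as fixed throughout.

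First, I would fix a countable basis $\{w_k\}_{k\ge 1}$ of $C_{0,\mathrm{sol}}^\infty(G)$ that is orthonormal in $H$, and seek $u^N(x,t)=\sum_{k=1}^N c_k^N(t)w_k(x)$ solving the system obtained by projecting (\ref{n-s_ini}) onto $\mathrm{span}\{w_1,\dots,w_N\}$. Carath\'eodory theory applied to the resulting finite-dimensional ODE (well-posed by continuity of $\mathbf{A}$ in $\xi$, hypothesis h1) gives local solutions. Testing against $u^N$ and using $\mathrm{div}\,u^N=0$ to annihilate the convective contribution, together with h3, yields
$$
\frac{1}{2}\|u^N(t)\|_{L^2}^2+\int_0^t\!\!\int_G A(x/\eps,Du^N)\cdot Du^N\,dx\,ds=\frac{1}{2}\|u^N(0)\|_{L^2}^2,
$$
which gives global existence together with uniform bounds for $u^N$ in $L^\infty(0,T;H)$ and for $Du^N$ in the Musielak--Orlicz space $L^{p_\eps(\cdot)}(Q_T)$; by h4, $A(x/\eps,Du^N)$ is then bounded in $L^{p_\eps'(\cdot)}(Q_T)$.

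To pass to the limit, the lower bound $\alpha\ge 3d/(d+2)$ together with Sobolev embedding in variable-exponent spaces puts $u^N\otimes u^N$ in a space dual to admissible test gradients, so that $\partial_t u^N$ lies in $(X^\eps)^*$; the sharper lower bound $\alpha\ge (d+\sqrt{3d^2+4d})/(d+2)$ is exactly what triggers an Aubin--Lions-type compact embedding, so that $u^N\to u$ strongly in $L^2(Q_T)$ along a subsequence and $u^N\otimes u^N\to u\otimes u$. For the nonlinear viscous flux, let $\chi$ be the weak limit of $A(x/\eps,Du^N)$ in $L^{p_\eps'(\cdot)}(Q_T)$; Minty's monotonicity trick based on h2 via
$$
\int_{Q_T}\big(A(x/\eps,Du^N)-A(x/\eps,Dv)\big)\cdot(Du^N-Dv)\,dx\,dt\ge 0
$$
for admissible $v$, combined with the Galerkin energy equality and the choice $v=u+\lambda w$, $\lambda\to 0^\pm$, identifies $\chi=A(x/\eps,Du)$. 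The initial condition (ii) comes from weak continuity in $t$, and the energy inequality (iii) from lower semicontinuity of the convex term.

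The main obstacle is the interplay of the variable exponent $p_\eps(x)$ with the convective nonlinearity. Standard Sobolev embeddings, interpolation, and density of smooth functions must be reworked within the Musielak--Orlicz framework, where the modular controls integrability only through the pointwise exponent. The threshold $\alpha_0(d)$ is chosen precisely so that two competing requirements -- compactness of $u^N$ in $L^2(Q_T)$ and meaningful pairing of $u\otimes u$ with admissible test gradients -- can be verified simultaneously in the same Musielak--Orlicz space; reconciling these two bounds without resorting to a fixed global exponent is the delicate point.
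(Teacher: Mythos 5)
The paper does not prove Theorem \ref{t_exi} itself; it quotes the result from \cite{Zh09_fa}, so your proposal has to be judged on its own merits. The Galerkin scheme, the energy estimate, the modular bounds from {\bf h3}--{\bf h4}, and the Aubin--Lions compactness for the convective term are all sound and are indeed the standard skeleton. The gap is in the one step you compress into a single sentence: identifying $\chi=A(x/\eps,Du)$ by ``Minty's trick combined with the Galerkin energy equality.'' To close Minty this way you need
$\limsup_N\int_{Q_T}A(x/\eps,Du^N)\cdot Du^N\,dx\,dt\le\int_{Q_T}\chi\cdot Du\,dx\,dt$,
and the only route to the right-hand side is to test the limit equation with $u$ itself, i.e.\ to have the energy \emph{equality} for $u$. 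That requires the pairing $\int_{Q_T}(u\otimes u)\cdot Du\,dx\,dt$ to be finite and to vanish. But in the regime covered by the theorem one only has $|u|^2\in L^1(0,T;L^{\alpha'}(G))$ (Lemma \ref{l_emb_imp}) against $Du\in L^{\alpha}(Q_T)$; the time integrabilities do not match unless $\alpha\ge\frac{3d+2}{d+2}$, and $\alpha_0(3)\approx1.845<\frac{11}{5}$. This is exactly why the definition of weak solution retains only the energy \emph{inequality} and why the paper remarks that energy conservation may be violated. So the naive Minty closure fails precisely in the range the theorem is about.

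The actual argument (in \cite{Zh09_fa}, and reproduced in spirit in Section \ref{s_2scale-conv} of this paper for the homogenization limit) circumvents this by a localized-in-time device: subtract the energy inequality from the weak formulation tested against a \emph{time-independent} $\eta\in C^\infty_{0,{\rm sol}}(G)$, obtaining
$\int_{t_0}^{t_1}\!\!\int_G A\cdot Du^N\le\int_{t_0}^{t_1}\!\!\int_G[A-u^N\otimes u^N]\cdot D\eta+\frac12\int_G|u^N(t_0)-\eta|^2$,
then let $\eta\to u(\cdot,t_0)$ in the appropriate Sobolev--Orlicz space and take $t_1=t_0+h$, $h\to0$, at Lebesgue points. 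Here $\frac1h\int_{t_0}^{t_0+h}(u\otimes u)\,dt\in L^{\alpha'}(G)$ pairs with the fixed $D\eta\in L^{\alpha}(G)$, which is all that $\alpha\ge\alpha_0(d)$ guarantees. Relatedly, your attribution of the threshold $\alpha_0(d)$ to the Aubin--Lions step is off: compactness of $u^N$ in $L^2(Q_T)$ needs only $\alpha>\frac{2d}{d+2}$; the value $\alpha_0(d)$ is exactly the interpolation threshold for $|u|^2\in L^1(0,T;L^{\alpha'}(G))$, i.e.\ for the convective term to be dominated by the viscous one in the sense needed above. Without this localization your argument, as written, does not produce the identification of the viscous flux.
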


\begin{remark}{\rm
In dimension $d=3$ we have $\alpha_0(3)\in (1.84, \,1.85)$.
}
\end{remark}

The condition $\al\ge \al_0$  ensures that the convective term $u\otimes u$ can be estimated in terms of  
the viscous term. More precisely, the following statement holds.

\begin{lemma} \label{l_emb_imp}
If  $u\in X\cap L^\infty(0,T,H)$,  then
$$
|u|^2 \in L^1(0,T,L^{\alpha'}(G)).
$$
\end{lemma}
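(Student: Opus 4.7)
My plan is to reduce the claim $|u|^2\in L^1(0,T;L^{\alpha'}(G))$ to the statement $u\in L^2(0,T;L^{2\alpha'}(G))$, and then to obtain the latter by interpolating a spatial Sobolev estimate $u\in L^\alpha(0,T;L^{\alpha^*}(G))$ against the kinetic-energy bound $u\in L^\infty(0,T;L^2(G))$ built into the space $H$. The threshold $\alpha_0(d)$ should emerge as the sharp value of $\alpha$ that makes this interpolation close up in time.

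First, since $u\in X$, the modular $\int_{Q_T}|Du|^{p(\cdot)}\,dxdt$ is finite. The assumption $p(\cdot)\geq \alpha$ combined with the pointwise estimate $|Du|^\alpha\leq 1+|Du|^{p(\cdot)}$ yields $Du\in L^\alpha(Q_T)$, hence $u\in L^\alpha\bigl(0,T;W^{1,\alpha}_0(G)\bigr)$. The classical Sobolev embedding then gives
\[
u\in L^\alpha\bigl(0,T;L^{\alpha^*}(G)\bigr),\qquad \alpha^*=\frac{d\alpha}{d-\alpha},
\]
in the subcritical range $\alpha<d$; the borderline and supercritical cases are easier and handled analogously.

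Next, at almost every $t$, the log-convexity of Lebesgue norms produces
\[
\|u(\cdot,t)\|_{L^{2\alpha'}}\leq \|u(\cdot,t)\|_{L^2}^{\lambda}\,\|u(\cdot,t)\|_{L^{\alpha^*}}^{1-\lambda}
\]
with $\lambda\in[0,1]$ determined by $\tfrac{1}{2\alpha'}=\tfrac{\lambda}{2}+\tfrac{1-\lambda}{\alpha^*}$. Such a $\lambda$ exists exactly when $2\alpha'\leq \alpha^*$, which simplifies to $\alpha\geq 3d/(d+2)$ — the second branch in the definition of $\alpha_0(d)$. Squaring, integrating in $t$, and applying H\"older then gives
\[
\int_0^T\|u\|_{L^{2\alpha'}}^2\,dt\leq \|u\|_{L^\infty(L^2)}^{2\lambda}\int_0^T\|u\|_{L^{\alpha^*}}^{2(1-\lambda)}\,dt,
\]
and the right-hand side is finite as soon as $2(1-\lambda)\leq \alpha$, since the last integral is then controlled by the already established $L^\alpha(0,T;L^{\alpha^*})$ norm. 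Combining this last inequality with the interpolation identity for $\lambda$ reduces, after routine algebra, to the quadratic condition $(d+2)\alpha^2-2d\alpha-2d\geq 0$, equivalently $\alpha\geq (d+\sqrt{3d^2+4d})/(d+2)$ — the first branch in the definition of $\alpha_0(d)$.

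The only real obstacle is the exponent arithmetic: verifying that the single hypothesis $\alpha\geq \alpha_0(d)$ ensures both $2\alpha'\leq \alpha^*$ and the existence of $\lambda\in[0,1]$ with $2(1-\lambda)\leq \alpha$ simultaneously, so that the Lyapunov interpolation delivers a space exponent reaching $2\alpha'$ with an enough-integrable profile in $t$. Beyond that the argument uses only Sobolev embedding, a Lyapunov-type interpolation inequality, and H\"older in time; no monotonicity, compactness, or probabilistic input is needed.
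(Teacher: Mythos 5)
Your argument is correct, and it is worth noting that the paper itself gives no proof of this lemma (it is taken from \cite{Zh09_fa}); your interpolation proof is the standard one and it does recover exactly the two branches of $\alpha_0(d)$: the constraint $2\alpha'\le\alpha^*$ is equivalent to $\alpha\ge 3d/(d+2)$ and, with $\lambda=\bigl(\alpha(d+2)-3d\bigr)/\bigl(\alpha(d+2)-2d\bigr)$, the time-integrability constraint $2(1-\lambda)\le\alpha$ is equivalent to $(d+2)\alpha^2-2d\alpha-2d\ge0$, i.e.\ $\alpha\ge\bigl(d+\sqrt{3d^2+4d}\bigr)/(d+2)$, so the hypothesis $\alpha\ge\alpha_0(d)=\max\{\cdot,\cdot\}$ is precisely what makes both steps work. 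The one step you pass over silently is the passage from $Du\in L^\alpha(Q_T)$ to $u\in L^\alpha\bigl(0,T;W^{1,\alpha}_0(G)\bigr)$: here $D$ is the \emph{symmetrized} gradient, so you need Korn's inequality $\|\nabla u\|_{L^\alpha(G)}\le C\|Du\|_{L^\alpha(G)}$ for functions vanishing on $\partial G$, valid for $1<\alpha<\infty$; this should be stated explicitly, but it is a standard fact and does not affect the validity of the argument. The supercritical case $\alpha\ge d$ is indeed easier, as you say, since then $W^{1,\alpha}_0(G)$ embeds into every $L^q(G)$, $q<\infty$.
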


\begin{remark}{\rm
In the classical case we have  $p=\frac{3d+2}{d+2}$, see \cite{Lad, Li}.  Notice that if $\alpha=\frac{3d+2}{d+2}$  then
$$
|u|^2\in L^{\alpha'}(0,T,L^{\alpha'}(G))=L^{\alpha'}(Q_T).
$$
In this case the convective term is \emph{completely} subjected to viscous one.}
\end{remark}


Due to Theorem \ref{t_exi}, for each $\eps>0$ problem (\ref{n-s_ini}) has a solution. Our goal is to study the limit behaviour of
these solutions as $\eps\to0$.

The following sections deal with the homogenization procedure.  This procedure relies on a number of auxiliary cell problems and the corresponding
functional spaces. We introduce these spaces here.

We denote by $L^{p(\cdot)}(\Omega,\mathbb R^{d(d+1)/2})$ the space of functions defined on $\Omega$ with values in the space of $d\times d$ symmetric matrices and such that
$$
\int\limits_{\Omega}|\phi(\omega)|^{p(\omega)}\,d\mP(\omega)<\infty.
$$
This space is equipped with the corresponding Luxemburg norm
$$
\|\phi\|\big._{L^{p(\cdot)}(\Omega,\mathbb R^{d(d+1)/2})}=\inf\Big\{\lambda>0\,:\,\int_\Omega |\lambda^{-1}\phi(\omega)|^{p(\omega)}\,d\mP(\omega)\leq 1\Big\}.
$$
As an immediate consequence of the properties of dynamical system $\tau$ and the Fubini theorem we have

\begin{lemma}\label{l_orl-realiz}
Let $\phi\in L^{p(\cdot)}(\Omega,\mathbb R^{d(d+1)/2})$. Then a.s.
$\phi(\tau_x\omega)\in L_{\rm loc}^{p(\tau_x\omega)}(\mathbb R^d,\mathbb R^{d(d+1)/2})$.
Moreover,
$$
\mathbf{E}\int_S |\phi(\tau_x\omega)|^{p(\tau_x\omega)}dx=|S|\int_\Omega |\phi(\omega)|^{p(\omega)}\,d\mP(\omega)
$$
for any bounded Borel set $S\subset\mathbb R^d$.
\end{lemma}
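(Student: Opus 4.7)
The plan is to treat this as a routine measurability-plus-Fubini argument, leveraging the measure-preserving property of the flow $\tau$.

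First I would establish joint measurability of the function $(\omega,x)\mapsto|\phi(\tau_x\omega)|^{p(\tau_x\omega)}$. Since $p$ and $\phi$ are measurable on $(\Omega,\mathcal F)$, the scalar map $F(\omega)=|\phi(\omega)|^{p(\omega)}$ is measurable on $\Omega$. By the standing assumption that the action $\tau:\Omega\times\mathbb R^d\to\Omega$ is measurable when $\mathbb R^d$ is equipped with its Borel $\sigma$-algebra, the composition $(\omega,x)\mapsto F(\tau_x\omega)$ is measurable on $\Omega\times\mathbb R^d$. This is what allows both iterated integrals in the sequel to make sense.

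Second, for each fixed $x\in\mathbb R^d$ the map $\tau_x:\Omega\to\Omega$ preserves $\mP$, so
\[
\int_\Omega F(\tau_x\omega)\,d\mP(\omega)=\int_\Omega F(\omega)\,d\mP(\omega)=\int_\Omega|\phi(\omega)|^{p(\omega)}\,d\mP(\omega),
\]
which is finite by the assumption $\phi\in L^{p(\cdot)}(\Omega,\mathbb R^{d(d+1)/2})$. Applying Tonelli's theorem to the non-negative measurable integrand $F(\tau_x\omega)$ on the product space $S\times\Omega$ (with $S$ of finite Lebesgue measure) yields
\[
\mathbf{E}\int_S F(\tau_x\omega)\,dx=\int_S\mathbf{E}\,F(\tau_x\omega)\,dx=|S|\int_\Omega|\phi(\omega)|^{p(\omega)}\,d\mP(\omega),
\]
which is the claimed identity.

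Finally, to deduce the a.s. local integrability statement, I would exhaust $\mathbb R^d$ by a countable family, e.g. $B_n=\{|x|\le n\}$. The identity applied with $S=B_n$ gives $\mathbf{E}\int_{B_n}F(\tau_x\omega)\,dx<\infty$, so there is a $\mP$-null set $N_n$ outside of which $\int_{B_n}|\phi(\tau_x\omega)|^{p(\tau_x\omega)}\,dx<\infty$; taking the union $N=\bigcup_n N_n$ gives a single null set outside of which the modular is finite on every ball, which is the defining property of $L^{p(\tau_\cdot\omega)}_{\rm loc}$. There is no real obstacle here: the only points requiring mild care are the joint measurability (handled by the continuity/measurability of $\tau$ postulated in Section~\ref{sec_pbmsetup}) and the exhaustion argument to upgrade a null set depending on $S$ to a single null set; the variable exponent $p(\omega)$ causes no additional difficulty because the modular—rather than the Luxemburg norm—is what appears in both sides of the identity.
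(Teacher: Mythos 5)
Your argument is correct and is exactly the one the paper intends: the lemma is stated there as an immediate consequence of the measure-preserving property of $\tau$ together with the Fubini (Tonelli) theorem, which is precisely your combination of joint measurability, invariance of $\mathbf{P}$ under $\tau_x$, and integration over $S\times\Omega$. Your additional exhaustion argument to extract a single null set for the a.s.\ local integrability is the routine detail the paper leaves implicit.
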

We now denote by $\partial_i$ and $\mathcal{D}_i$ the generator of $\tau$ in the $i$-th coordinate direction and its domain in $L^{2}(\Omega)$,
respectively.   We also set $\mathcal{D}=\bigcap\limits_{i=1}^d\mathcal{D}_i$ and
$$
\mathcal{D}^\infty= \{\phi\in L^\infty(\Omega)\,:\, \partial_{i_1},\ldots\partial_{i_k}\phi\in
L^{2}(\Omega)\hbox{ for all }i_1,\ldots,i_k\}.
$$
The set $\mathcal{D}^\infty$ is dense in $L^p(\Omega)$ for any $p>1$.  The realizations of functions from $\mathcal{D}^\infty$ are a.s. smooth
functions, see \cite{JKO}.

Denote $\mathcal{G}(\Omega)$ the closure of $\{D_\omega \phi,\:\, \phi\in (\mathcal{D}^\infty)^d, \, \mathrm{div}_\omega \phi=0\}$
in $L^{p(\cdot)}(\Omega)$,
where $(D_\omega \phi)_{ij}=\frac{1}{2}(\partial_i \phi_j+\partial_j \phi_i)$, and $\mathrm{div}_\omega \phi=
\partial_1\phi_1+\ldots+\partial_d \phi_d$. We then define
$$
\mathcal{G}^\perp(\Omega)=\Big\{\theta\in L^{p'(\cdot)}(\Omega;\mathbb R^{d(d+1)/2})\,:\, \int_\Omega \theta\cdot v\,d\mP(\omega)=0\ \ \hbox{for all }v\in \mathcal{G}(\Omega) \Big\}.
$$

\section{Homogenization}\label{s_homog}

In this section  we prove a number of auxiliary statements and formulate the homogenization result.
From item (iii) of the definition of a solution to problem (\ref{n-s_ini}) it follows that for each $\eps>0$
and each $\omega\in\Omega$ we have
\begin{equation}\label{aprio}
\sup\limits_{0\leq t\leq T}\|(|u^\eps(\cdot,t)|)\|\big._{L^2(G)} ^2+\int\limits_0^t\int\limits_G|Du^\eps(x,s)|^{p_\eps(x)}\,dxds\leq
C\|(|u_0|)\|^2_{L^2(G)}
\end{equation}
with a deterministic constant $C$. We recall that $p_\eps(x)=p(\tau_{x/\eps}\omega)$.
Considering {\bf h3.}, {\bf h4.} and (\ref{exp_bouu}) we derive from (\ref{aprio})

\begin{lemma}\label{l_apri}
For each $\omega\in\Omega$ the sequence $Du^\eps$ is bounded in $L^{\alpha}(Q_T; \mathbb R^{d(d+1)/2})$, and
the sequence  $A^\eps=A(x/\eps, Du^\eps)$ is bounded in $L^{\beta'}(Q_T;\mathbb R^d)$.
\end{lemma}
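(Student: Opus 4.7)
The plan is to deduce both bounds directly from the energy estimate (\ref{aprio}) by exploiting the uniform exponent bounds (\ref{exp_bouu}) and the growth condition \textbf{h4}. No PDE machinery is needed; it is a pointwise calculation combined with integration over $Q_T$.

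First I would handle $Du^\eps$. The estimate (\ref{aprio}) gives a deterministic bound on $\int_{Q_T}|Du^\eps|^{p_\eps(x)}\,dxdt$. To convert this variable-exponent bound into an $L^\alpha$ bound, I would split $Q_T$ into the sets $\{|Du^\eps|\le 1\}$ and $\{|Du^\eps|>1\}$. On the first set $|Du^\eps|^\alpha\le 1$, contributing at most $|Q_T|$. On the second set, the inequality $\alpha\le p(\omega)$ from (\ref{exp_bouu}) implies $|Du^\eps|^\alpha\le |Du^\eps|^{p_\eps(x)}$ pointwise, so this part is controlled by $\int_{Q_T}|Du^\eps|^{p_\eps(x)}\,dxdt$. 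Adding the two contributions gives an $\eps$-uniform (though $\omega$-dependent through the constant in (\ref{aprio})) bound of $Du^\eps$ in $L^\alpha(Q_T)$.

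Next I would bound $A^\eps$. Applying \textbf{h4} with $\omega$ replaced by $\tau_{x/\eps}\omega$ yields the pointwise estimate
$$
|A^\eps(x,t)|^{p'_\eps(x)}\le c_1\bigl|Du^\eps(x,t)\bigr|^{p_\eps(x)}+c_1,
$$
where $p'_\eps(x)=p_\eps(x)/(p_\eps(x)-1)$. Integrating over $Q_T$ and invoking (\ref{aprio}) shows that $\int_{Q_T}|A^\eps|^{p'_\eps(x)}\,dxdt$ is bounded uniformly in $\eps$. Now I would again split the domain according to $|A^\eps|\lessgtr 1$. From (\ref{exp_bouu}) we have $p_\eps(x)\le\beta$, hence $p'_\eps(x)\ge\beta'$, and therefore on the set $\{|A^\eps|\ge 1\}$ we get $|A^\eps|^{\beta'}\le|A^\eps|^{p'_\eps(x)}$. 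On the complementary set the integrand is trivially bounded by $1$. Summing the two contributions gives the desired $L^{\beta'}(Q_T)$ bound.

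There is no serious obstacle here: the argument is a routine exploitation of the two-sided bound on $p(\omega)$ together with the structural hypotheses \textbf{h3}--\textbf{h4}, which are precisely designed so that the variable-exponent energy estimate converts cleanly into the uniform-exponent bounds $L^\alpha$ and $L^{\beta'}$ required to extract weak limits in later sections. The only conceptual subtlety to flag is that the constant depends on $\omega$ through (\ref{aprio}), but this $\omega$-by-$\omega$ boundedness is exactly what the subsequent homogenization argument needs.
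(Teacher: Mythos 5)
Your argument is correct and is precisely the computation the paper intends: it states Lemma \ref{l_apri} as an immediate consequence of (\ref{aprio}) together with \textbf{h4} and (\ref{exp_bouu}), and your splitting of $Q_T$ according to whether the integrand exceeds $1$, using $\alpha\le p_\eps(x)\le\beta$ (hence $p'_\eps(x)\ge\beta'$), is the standard way to make that deduction explicit. No discrepancy with the paper's approach.
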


Using the standard arguments (see \cite[Section 5]{Zh09_fa}), one can show that  $\{u^\eps(\cdot,t)\}$ is a family of weakly
equicontinuous  functions $[0,T]\mapsto L^2(G;\mathbb R^{d(d+1)/2})$.
Moreover, by the Aubin--Lions lemma, this family is compact in  $L^2(Q_T;\mathbb R^d)$.
This yields the following convergence result.

\begin{lemma}\label{l_wconv}
For a subsequence, as $\eps\to0$,
$$
\begin{array}{rl}
\displaystyle u^\eps(\cdot,t)\rightharpoonup u(\cdot,t) \ &\displaystyle\hbox{\rm weakly in }L^2(G;\mathbb R^{d}) \ \ \hbox{\rm for all }t\in[0,T];\\[2mm]
\displaystyle u^\eps(\cdot,t)\to u(\cdot,t) \ &\displaystyle \hbox{\rm in }L^2(G;\mathbb R^{d}) \ \ \hbox{\rm for a.a. }t\in[0,T];\\[2mm]
\displaystyle Du^\eps \rightharpoonup Du\ &\displaystyle \hbox{\rm weakly in }L^{\alpha}(Q_T;\mathbb R^{d(d+1)/2)});\\[2mm]
\displaystyle A\Big(\frac{\cdot}{\eps},Du^\eps\Big) \rightharpoonup z^0 \ &\displaystyle \hbox{\rm weakly in }L^{\beta'}(Q_T;\mathbb R^{d(d+1)/2}).
\end{array}
$$
\end{lemma}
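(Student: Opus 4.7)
The plan is to combine the uniform bounds from Lemma~\ref{l_apri} and the energy estimate~\eqref{aprio} with weak compactness in the relevant reflexive spaces, and then perform a diagonal subsequence extraction. First I would extract weakly convergent subsequences one by one: from $\sup_t\|u^\eps(\cdot,t)\|_{L^2(G)}\le C$ and Banach--Alaoglu, a subsequence along which $u^\eps\rightharpoonup u$ weakly-$\ast$ in $L^\infty(0,T;L^2(G;\mathbb R^d))$; from the bounds of Lemma~\ref{l_apri}, after further extractions, $Du^\eps\rightharpoonup g$ in $L^\alpha(Q_T;\mathbb R^{d(d+1)/2})$ and $A(\cdot/\eps,Du^\eps)\rightharpoonup z^0$ in $L^{\beta'}(Q_T;\mathbb R^{d(d+1)/2})$.

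Next I would identify $g$ with $Du$. The weak-$\ast$ convergence of $u^\eps$ in $L^\infty(0,T;L^2(G))$ implies convergence in $\mathcal{D}'(Q_T;\mathbb R^d)$; since the symmetric gradient is a continuous linear operator on distributions, passing to the limit in $Du^\eps$ inside $\mathcal{D}'$ forces $g=Du$ by uniqueness of distributional limits. This yields the third convergence in the lemma, and the fourth is just the weak compactness already in hand.

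For the two pointwise-in-time assertions I would exploit the weak equicontinuity of $t\mapsto u^\eps(\cdot,t)$ stated in the paragraph preceding the lemma. For each fixed $\varphi\in C_{0,{\rm sol}}^\infty(G)$ the scalar functions $t\mapsto\int_G u^\eps(x,t)\cdot\varphi(x)\,dx$ form a uniformly bounded, equicontinuous family on $[0,T]$; Arzel\`a--Ascoli then gives uniform convergence along a subsequence to a continuous function of $t$ which, combined with the already-obtained convergence in $L^2(Q_T)$, must equal $\int_G u(x,t)\cdot\varphi(x)\,dx$. Diagonalising over a countable dense $\{\varphi_k\}\subset C_{0,{\rm sol}}^\infty(G)$ produces a subsequence for which $u^\eps(\cdot,t)\rightharpoonup u(\cdot,t)$ in $H$ for \emph{every} $t\in[0,T]$. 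The strong-$L^2$ convergence for a.a.\ $t$ then follows from the Aubin--Lions compactness of $\{u^\eps\}$ in $L^2(Q_T;\mathbb R^d)$ quoted just before the lemma: $u^\eps\to u$ in $L^2(Q_T)$, whence $u^\eps(\cdot,t)\to u(\cdot,t)$ in $L^2(G)$ for a.e.\ $t\in[0,T]$.

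The main technical hurdle is ensuring that all four convergences are realised along \emph{one and the same} subsequence; this is handled by the successive extractions sketched above followed by a standard diagonal argument. The secondary subtlety, alluded to in the paper as ``standard,'' is that the weak equicontinuity---and hence the Aubin--Lions step---requires a uniform control on $\partial_t u^\eps$ in a negative-order space acting on divergence-free test functions. This control is read off from the equation~\eqref{n-s_ini} by combining the $L^{\beta'}$ bound on $A(\cdot/\eps,Du^\eps)$ with the integrability of the convective term $u^\eps\otimes u^\eps$ supplied by Lemma~\ref{l_emb_imp}.
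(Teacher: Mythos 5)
Your proposal is correct and follows essentially the same route the paper takes (and merely sketches): weak compactness from the a priori bounds of \eqref{aprio} and Lemma~\ref{l_apri}, identification of the limit of $Du^\eps$ by distributional uniqueness, weak equicontinuity of $t\mapsto u^\eps(\cdot,t)$ obtained from the equation's control of $\partial_t u^\eps$ in a negative-order space, and Aubin--Lions compactness in $L^2(Q_T)$ for the a.e.-in-$t$ strong convergence, all tied together by a diagonal extraction. The paper delegates these steps to the ``standard arguments'' of \cite[Section 5]{Zh09_fa}; your write-up supplies exactly those details, including the correct observation that the equicontinuity rests on the $L^{\beta'}$ bound for $A(\cdot/\eps,Du^\eps)$ together with Lemma~\ref{l_emb_imp} for the convective term.
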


Notice that $u=u(x,t)$ and $z^0=z^0(x,t)$ might depend on $\omega$.

Passing to the limit in the integral identity (i) we obtain
\begin{equation}\label{lim_ii}
\int_G [u(x,t'')-u(x,t')]\cdot\varphi(x)\,dx+\int_{t'}^{t''}\!\!\int_G\big[z^0-u\otimes u\big]
\cdot D\varphi\,dxdt=0
\end{equation}
for any $\varphi\in C_{0,{\rm sol}}^\infty(G)$ and for any $t',t''\in[0,T]$. The crucial step now is to determine
a relation between $z^0$ and $Du$.  To this end we consider the following auxiliary problem: given $\xi\in\mathbb R^{d(d+1)/2}$
find $v_\xi\in \mathcal{G}(\Omega)$ such that
\begin{equation}\label{cellpbm}
\int_\Omega \mA(\omega,v_\xi(\omega)+\xi)\cdot \theta(\omega)\,d\mP(\omega)=0 \quad\hbox{for any }\theta\in  \mathcal{G}(\Omega).
\end{equation}
\begin{lemma}\label{l_cell-exi}
Under assumptions {\bf h1.}--{\bf h4.} problem (\ref{cellpbm}) has a unique solution for each $\xi\in\mathbb R^{d(d+1)/2}$.
\end{lemma}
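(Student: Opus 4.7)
\medskip
\noindent\textbf{Proof strategy.}
The plan is to recast (\ref{cellpbm}) as a nonlinear monotone equation on the reflexive Banach space $\mathcal{G}(\Omega)$ and to apply the Browder--Minty theorem. Define the operator $F_\xi\,:\,\mathcal{G}(\Omega)\to\mathcal{G}(\Omega)^*$ by
$$
\langle F_\xi(v),\theta\rangle := \int_\Omega \mA(\omega,v(\omega)+\xi)\cdot\theta(\omega)\,d\mP(\omega),\qquad v,\theta\in\mathcal{G}(\Omega).
$$
Since $1<\alpha\le p(\omega)\le\beta<\infty$, the variable exponent space $L^{p(\cdot)}(\Omega;\mathbb R^{d(d+1)/2})$ is reflexive and separable, and $\mathcal{G}(\Omega)$ is a closed subspace. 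Assumption {\bf h4.} together with the Fubini/Luxemburg machinery ensures that $\mA(\cdot,v+\xi)\in L^{p'(\cdot)}(\Omega)$ whenever $v\in L^{p(\cdot)}(\Omega)$, so $F_\xi$ is well defined.

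Next I would verify the three standard ingredients needed for Browder--Minty. Strict monotonicity of $F_\xi$ follows directly from {\bf h2.}: for $v_1\ne v_2$,
$$
\langle F_\xi(v_1)-F_\xi(v_2),v_1-v_2\rangle=\int_\Omega\bigl(\mA(\omega,v_1+\xi)-\mA(\omega,v_2+\xi),v_1-v_2\bigr)d\mP(\omega)>0.
$$
Hemicontinuity of $t\mapsto \langle F_\xi(v+t w),\theta\rangle$ follows from the $\xi$-continuity in {\bf h1.}, the growth bound {\bf h4.}, and Lebesgue dominated convergence in the variable exponent setting. Coercivity is obtained from {\bf h3.}: writing $\langle F_\xi(v),v\rangle=\int \mA(\omega,v+\xi)\cdot(v+\xi)\,d\mP-\int \mA(\omega,v+\xi)\cdot\xi\,d\mP$, bounding the first term below by $c_0\int |v+\xi|^{p(\omega)}d\mP-c_0^{-1}$ and absorbing the second one via Young's inequality against {\bf h4.}, one concludes that $\langle F_\xi(v),v\rangle/\|v\|_{L^{p(\cdot)}}\to+\infty$ as $\|v\|_{L^{p(\cdot)}}\to\infty$.

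Once these three properties are established, the Browder--Minty theorem furnishes a unique $v_\xi\in\mathcal{G}(\Omega)$ with $F_\xi(v_\xi)=0$ in $\mathcal{G}(\Omega)^*$, which is exactly (\ref{cellpbm}). Uniqueness is immediate from strict monotonicity: if $v_\xi^{(1)}$ and $v_\xi^{(2)}$ were two solutions, choosing $\theta=v_\xi^{(1)}-v_\xi^{(2)}\in\mathcal{G}(\Omega)$ in the difference of the two identities would force the strictly positive quantity in {\bf h2.} to vanish a.s., hence $v_\xi^{(1)}=v_\xi^{(2)}$.

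\medskip
\noindent\textbf{Main obstacle.} The genuinely delicate point is the correct functional-analytic setup in variable exponent spaces rather than any of the monotone-operator manipulations: one has to justify that $L^{p(\cdot)}(\Omega)$ is reflexive under the bound (\ref{exp_bouu}), that its dual is $L^{p'(\cdot)}(\Omega)$, and that functionals on the closed subspace $\mathcal{G}(\Omega)$ are represented (modulo $\mathcal{G}^\perp(\Omega)$) by elements of $L^{p'(\cdot)}(\Omega)$. Given the uniform bounds $1<\alpha\le p(\omega)\le\beta<\infty$, these facts are standard and reduce the cell problem to the classical Browder--Minty framework.
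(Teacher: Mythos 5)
Your proposal is correct and follows essentially the same route as the paper: both recast (\ref{cellpbm}) as a monotone, hemicontinuous, coercive operator equation on the reflexive space $\mathcal{G}(\Omega)$ and invoke the Browder--Minty existence theorem (the paper cites Lions, Theorem 2.2.1), with monotonicity from {\bf h2.}, hemicontinuity from {\bf h1.}, {\bf h4.} and dominated convergence, and coercivity from {\bf h3.}. Your treatment of the coercivity cross-term via Young's inequality is a slightly more explicit version of what the paper dispatches as ``an immediate consequence of {\bf h3.}''
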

\begin{proof}[{\it Proof} {\rm relies on classical result for monotone operators}] Denote by ${\cal A}_\xi$ the operator mapping
$\mathcal{G}(\Omega)$ to $\mathcal{G}^\perp(\Omega)$ and defined by ${\cal A}_\xi[\theta](\omega)=A(\omega,\xi+\theta(\omega))$.
Due to assumption {\bf h2.} this operator is monotone. From {\bf h4.} it follows that ${\cal A}_\xi$ is bounded.   Then, from {\bf h1.} and {\bf h4.}
with the help of Lebesgue theorem one can derive that the function
\begin{equation*}
s\ \longrightarrow\ \int_\Omega \mA(\omega,\xi+\theta_1(\omega)+s\theta_2(\omega))\cdot \theta_3(\omega)\,d\mP(\omega)=0
\end{equation*}
is continuous in $s\in\mathbb R$ for any $\theta_1, \,\theta_2,\,\theta_3\in\mathcal{G}(\Omega)$.   Also, as an immediate consequence of
{\bf h3.}, we have  $\|\theta\|^{-1}(\mathcal{A}_\xi(\theta),\theta)\to\infty$, as $\|\theta\|\to\infty$.  Then, by \cite[Theorem 2.2.1]{Li} problem
(\ref{cellpbm}) has a unique solution.
\end{proof}

\bigskip
The homogenized diffusion tensor is now introduced by
$$
A^{\rm eff}(\xi)=\int_\Omega \mA(\omega,\xi+v_\xi(\omega))\,d\mP(\omega).
$$

Consider an auxiliary variational problem
\begin{equation}\label{min_f}
f(\xi)=\min\limits_{v\in \mathcal{G}(\Omega)}\ \int_\Omega \frac{|\xi+v(\omega)|^{p(\omega)}}{p(\omega)}\,d\mP(\omega).
\end{equation}
The conjugate (in the sense of Young) functional takes the form
$$
f^*(\xi)=\Big\{\int_\Omega\frac{|w|^{p'(\omega)}}{p'(\omega)}\,d\mP(\omega)\,:\, w\in \mathcal{G}^\perp(\Omega),\, \int_\Omega w\,d\mP(\omega)=\xi\Big\}
$$
Both functionals $f$ and $f^*$ are convex and even. Moreover, $f(\xi)>0$ for $\xi\not=0$, and $f^*(\xi)>0$ for $\xi\not=0$. 
\begin{lemma}\label{l_delta2}
Function $f(\xi)$ satisfies the following inequality
$$
f(\lambda\xi)\leq\left\{\begin{array}{ll}
\lambda^\alpha f(\xi),\qquad \hbox{\rm if }\lambda\leq 1\\[2mm]
\lambda^\beta f(\xi),\qquad\hbox{\rm if }\lambda\geq 1
\end{array}
\right. .
$$
\end{lemma}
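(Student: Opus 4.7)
The plan is to exploit the fact that $\mathcal{G}(\Omega)$ is a linear subspace together with the two-sided bound $\alpha\le p(\omega)\le\beta$. Concretely, I would take an arbitrary admissible $v\in\mathcal{G}(\Omega)$ for the variational problem defining $f(\xi)$, and observe that $\lambda v\in\mathcal{G}(\Omega)$ as well (the defining set $\{D_\omega\phi:\phi\in(\mathcal{D}^\infty)^d,\,\mathrm{div}_\omega\phi=0\}$ is stable under scalar multiplication, and so is its closure). Thus $\lambda v$ is admissible in the minimization problem defining $f(\lambda\xi)$, which gives
$$
f(\lambda\xi)\ \le\ \int_\Omega\frac{|\lambda\xi+\lambda v(\omega)|^{p(\omega)}}{p(\omega)}\,d\mP(\omega)
\ =\ \int_\Omega \lambda^{p(\omega)}\,\frac{|\xi+v(\omega)|^{p(\omega)}}{p(\omega)}\,d\mP(\omega).
$$

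The next step is the pointwise (in $\omega$) comparison of $\lambda^{p(\omega)}$ with $\lambda^\alpha$ or $\lambda^\beta$. If $0<\lambda\le 1$, the function $s\mapsto\lambda^{s}$ is non-increasing, so from $p(\omega)\ge\alpha$ we deduce $\lambda^{p(\omega)}\le\lambda^\alpha$. If $\lambda\ge 1$, the function $s\mapsto\lambda^s$ is non-decreasing, so from $p(\omega)\le\beta$ we deduce $\lambda^{p(\omega)}\le\lambda^\beta$. Inserting these bounds into the integral above and factoring out $\lambda^\alpha$ or $\lambda^\beta$ yields
$$
f(\lambda\xi)\ \le\ \lambda^\alpha\!\int_\Omega\frac{|\xi+v|^{p(\omega)}}{p(\omega)}\,d\mP\qquad\text{or}\qquad
f(\lambda\xi)\ \le\ \lambda^\beta\!\int_\Omega\frac{|\xi+v|^{p(\omega)}}{p(\omega)}\,d\mP
$$
respectively. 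Finally, taking the infimum over $v\in\mathcal{G}(\Omega)$ on the right-hand side (or applying the inequality to a minimizing sequence, whose existence follows from the reflexivity of the Luxemburg space associated with $p(\cdot)$ thanks to $1<\alpha\le p\le\beta<\infty$, together with the closedness of $\mathcal{G}(\Omega)$) produces the two asserted estimates. The edge case $\lambda=0$ is trivial since $f(0)=0$ (taking $v\equiv 0$).

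There is essentially no real obstacle here: the argument is a direct scaling/homogeneity computation. The only point that deserves care is invoking the admissibility of $\lambda v$, which uses only linearity of $\mathcal{G}(\Omega)$, and the monotonicity in the exponent of $\lambda\mapsto\lambda^{p(\omega)}$, whose direction flips at $\lambda=1$ — precisely why the statement splits into two regimes governed respectively by $\alpha$ and $\beta$.
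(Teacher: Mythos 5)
Your proof is correct and follows essentially the same route as the paper: use the linearity of $\mathcal{G}(\Omega)$ to insert $\lambda v$ as a competitor for $f(\lambda\xi)$, pull out the factor $\lambda^{p(\omega)}$, and bound it by $\lambda^\alpha$ or $\lambda^\beta$ according to whether $\lambda\leq 1$ or $\lambda\geq 1$. The only cosmetic difference is that the paper tests directly with the minimizer $w_\xi$, while you take an arbitrary competitor and then pass to the infimum, which is an equally valid (and slightly more careful) formulation.
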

\begin{proof}
Denote $w_\xi$ the function in $\mathcal{G}(\Omega)$ that provides the minimum in (\ref{min_f}). We have
$$
f(\lambda\xi)=\int_\Omega\frac{|\lambda\xi+w_{\lambda\xi}(\omega)|^{p(\omega)}}{p(\omega)}\,d\mathbf{P}
\leq\int_\Omega\frac{|\lambda\xi+\lambda w_{\xi}(\omega)|^{p(\omega)}}{p(\omega)}\,d\mathbf{P}
$$
$$
\leq\int_\Omega\lambda^{p(\omega)}\frac{|\xi+ w_{\xi}(\omega)|^{p(\omega)}}{p(\omega)}\,d\mathbf{P}.
$$
This implies the desired inequality.
\end{proof}

\bigskip
Let $L^f(Q_T)$ be the associated with $f$ Orlicz space defined as
$$
L^f(Q_T)=\Big\{\phi\in L^1(Q_T,\mathbb R^{d(d+1)/2})\,:\, \int_{Q_T} f(\phi(x))\,dx<\infty\Big\}
$$
with the norm
$$
\||\phi\|\big._{L^f}=\inf\Big\{\lambda>0\,:\, \int_{Q_T}f(\lambda^{-1}\phi)\,dx\leq 1\Big\}.
$$
We also need the following Sobolev-Orlicz spaces:
$$
\begin{array}{c}
\displaystyle
W_0^{1,f}(G)=\big\{\phi\in W^{1,1}_0(G)\,:\, \mathrm{div} \phi=0,\, f(D\phi)\in L^1(G)\big\},\\[2mm]
\displaystyle
\|\phi\|\big._{W_0^{1,f}(G)}=\|D\phi\|\big._{L^f(G)}.
\end{array}
$$
and
$$
\begin{array}{c}
\displaystyle
X^f(Q_T)=\big\{\vartheta\in L^1((0,T), W^{1,1}_0(G;\mathbb R^d))\,:\, \mathrm{div}_x \vartheta=0,\, f(D\vartheta)\in L^1(Q_T)\big\},\\[2mm]
\displaystyle
\|\vartheta\|\big._{X^{f}(Q_T)}=\|D\vartheta\|\big._{L^f(Q_T)}.
\end{array}
$$

The following statement has been proved in \cite[Proposition X.2.6]{ET} 
\begin{lemma}\label{l_dens-sol}
The space $C_{0,{\rm sol}}^\infty(G)$ is dense in $W^{1,f}_0(G)$, and the space $C^\infty([0,T],C_{0,{\rm sol}}^\infty(G) )$
is dense in $X^{f}(Q_T)$.
\end{lemma}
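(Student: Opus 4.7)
The plan is to reduce the statement to standard density results for Sobolev--Orlicz spaces by first verifying that $f$ satisfies a $\Delta_2$-type growth condition, and then adapting the classical approximation scheme for divergence-free fields.

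First I would observe that Lemma \ref{l_delta2} gives a two-sided polynomial control
$$
f(\lambda\xi)\le\max\{\lambda^\alpha,\lambda^\beta\}\,f(\xi),\qquad\lambda>0,
$$
which is a $\Delta_2$ condition for both $f$ and its conjugate (the latter via {\bf h4.} and the duality of $p(\cdot)$ and $p'(\cdot)$). This has two consequences that I will use repeatedly: (a) the modular $\int_{Q_T} f(\phi)\,dx\,dt$ and the Luxemburg norm $\|\phi\|_{L^f}$ generate the same notion of convergence, and (b) $C^\infty(\overline{Q_T})$ (resp. $C^\infty(\overline G)$) is dense in $L^f(Q_T)$ (resp. $L^f(G)$). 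Combined with the fact that $\alpha\leq p(\omega)\leq\beta$ forces the continuous embeddings $L^\beta\hookrightarrow L^f\hookrightarrow L^\alpha$, every standard operator that is bounded simultaneously on the $L^\alpha$ and $L^\beta$ scale is then bounded on $L^f$ by interpolation/modular estimates.

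For the first density statement, take $\phi\in W^{1,f}_0(G)$ with $\mathrm{div}\,\phi=0$. Since $G$ is a bounded Lipschitz domain, I would use the classical decomposition of $G$ as a finite union of star-shaped Lipschitz subdomains and a subordinate partition of unity $\{\eta_k\}$. On each star-shaped piece (say, with respect to a ball $B_k$), the function $\eta_k\phi$ fails to be divergence-free, so I correct it by subtracting $\mathcal B_k(\nabla\eta_k\cdot\phi)$, where $\mathcal B_k$ is the Bogovski\u\i\ operator on that star-shaped piece. The resulting pieces are in $W^{1,f}_0$ with vanishing divergence, and $\mathcal B_k$ is bounded on Sobolev--Orlicz spaces under $\Delta_2$ (by the $L^\alpha$/$L^\beta$ bounds of Bogovski\u\i\ combined with the squeeze $L^\beta\hookrightarrow L^f\hookrightarrow L^\alpha$). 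Then on each star-shaped piece I would apply a dilation toward the star-center, $\phi^{(\lambda)}(x)=\phi(\lambda(x-x_0)+x_0)$ with $\lambda>1$, which compacts the support strictly inside the piece and converges to $\phi$ in the $W^{1,f}$ norm by the $\Delta_2$ property, followed by a mollification at a scale small enough to keep the support inside $G$. Since mollification commutes with $\mathrm{div}$, the result is in $C^\infty_{0,\mathrm{sol}}(G)$ and approximates $\phi$ in $W^{1,f}_0(G)$.

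For the second density statement, I would add a temporal approximation on top of the spatial one. Given $\vartheta\in X^f(Q_T)$ I extend $\vartheta$ by reflection past $t=0$ and $t=T$ (or via a fixed extension operator) to a compactly supported function of $t$, mollify in $t$ alone (this preserves $\mathrm{div}_x\vartheta=0$ and converges in $L^f$ thanks to $\Delta_2$), and then apply the spatial procedure above uniformly in $t$. The main obstacle, and the step that requires most care, is the boundedness of the Bogovski\u\i\ operator in $L^f$: this is the point where the non-standard growth of $f$ enters, and where the two-sided bound in Lemma \ref{l_delta2} is essential. Once this boundedness is established, the rest is a routine cutoff--dilate--mollify argument, and the combined scheme produces the asserted approximation in $X^f(Q_T)$.
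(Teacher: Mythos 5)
A point of comparison first: the paper does not actually prove Lemma \ref{l_dens-sol}; it cites \cite[Proposition X.2.6]{ET} and only remarks that for star-shaped domains the statement follows easily from smoothing operators, while ``for a generic Lipschitz domain the proof is more involved.'' Your sketch is therefore more ambitious than what the paper offers, and its architecture --- star-shaped decomposition of the Lipschitz domain, partition of unity, Bogovski\u{\i} correction of the divergence, dilation toward the star center, mollification, plus an extra mollification in $t$ for $X^f(Q_T)$ --- is exactly the standard route and is consistent with the paper's hint. The easy ingredients are fine: Lemma \ref{l_delta2} does give $f\in\Delta_2$, so modular and norm convergence coincide and dilations/mollifications converge in $L^f$; mollification commutes with $\mathrm{div}$ and with $D$; and the compatibility condition for the Bogovski\u{\i} operator on each piece is automatic because $\nabla\eta_k\cdot\phi=\mathrm{div}(\eta_k\phi)$ integrates to zero.

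The gap sits precisely at the step you yourself single out as the main obstacle: boundedness of the Bogovski\u{\i} operator on $L^f$. The justification you give --- the $L^\alpha$/$L^\beta$ bounds ``combined with the squeeze $L^\beta\hookrightarrow L^f\hookrightarrow L^\alpha$'' --- proves nothing. An operator bounded on $L^\alpha$ and on $L^\beta$ is bounded on the \emph{interpolation spaces} of the couple $(L^\alpha,L^\beta)$, and being sandwiched between the endpoints is not the same as being an interpolation space. The variable-exponent spaces $L^{p(\cdot)}$ appearing in this very paper are the canonical counterexample: they satisfy the same squeeze, yet singular integrals (and $\nabla\mathcal B$ is one, up to lower-order terms) fail to be bounded on them without extra regularity of the exponent. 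For $L^f$ the situation is better because $f$ does not depend on $x$, but a second difficulty remains: $f$ is a convex function of the matrix $\xi$, not of $|\xi|$, and Lemma \ref{l_delta2} allows the growth rate in $\lambda$ to depend on the direction $\xi/|\xi|$ (anywhere between $\lambda^\alpha$ and $\lambda^\beta$). Hence $L^f$ is in general not rearrangement invariant and not equivalent to a classical radial Orlicz space, so the off-the-shelf Calder\'on--Zygmund and interpolation theorems for Orlicz spaces with indices in $(\alpha,\beta)$ do not apply directly. To close the argument you would need either a direct modular estimate for the Bogovski\u{\i} operator adapted to $f$, or the convex-duality machinery of \cite[Chapter X]{ET} that the paper invokes; this is exactly the ``more involved'' part the authors allude to. The remainder of your scheme is routine once that step is secured.
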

For star-shaped domains this result can be easily proved with the help of smoothing operators. For a generic Lipschitz
domain the proof is more involved.

The properties of homogenized diffusion tensor $A^{\rm{eff}}$ are given in the following statement.
\begin{lemma}\label{l_eff-tenz}
The homogenized tensor $A^{\rm{eff}}$ is strictly monotone and continuous. Moreover, the flux $\mathbf{A}(\xi+v_\xi(\cdot))$ is a weakly continuous function
of $\xi$ with values in $L^{p'(\cdot)}(\Omega,\mathbb R^{d(d+1)/2})$.  There exist $c_0>0$ and $c_1>0$ such that
\begin{equation}\label{f-ineq}
\begin{array}{c}
\displaystyle
A^{\rm{eff}}(\xi)\cdot\xi\geq c_0f(\xi)-c_0^{-1},\\[2mm]
\displaystyle
f^*\big(A^{\rm{eff}}(\xi)\big)\leq  c_1f(\xi)+  c_1.
\end{array}
\end{equation}
\end{lemma}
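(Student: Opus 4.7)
The plan is to handle the four claims in order---strict monotonicity, continuity (with weak continuity of the flux as a byproduct), and then the two bounds in (\ref{f-ineq}). A preliminary observation used throughout: the constant functions belong to $\mathcal{G}^\perp(\Omega)$. Indeed, for any $\phi\in(\mathcal{D}^\infty)^d$ the mean of $\partial_i\phi_j$ vanishes, because the infinitesimal generator of a measure-preserving group has zero mean; hence $\int_\Omega v\,d\mP=0$ for every $v\in\mathcal{G}(\Omega)$, and in particular $A^{\rm eff}(\xi_i)\cdot\eta=\int_\Omega\mA(\omega,\xi_i+v_{\xi_i})\cdot\eta\,d\mP$ for every constant matrix $\eta$.

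For strict monotonicity I would test (\ref{cellpbm}), written for $\xi_1$ and for $\xi_2$, against $\theta=v_{\xi_1}-v_{\xi_2}\in\mathcal{G}(\Omega)$, subtract, and combine with the observation above to obtain
$$
\bigl(A^{\rm eff}(\xi_1)-A^{\rm eff}(\xi_2)\bigr)\cdot(\xi_1-\xi_2)=\int_\Omega\bigl[\mA(\omega,\xi_1+v_{\xi_1})-\mA(\omega,\xi_2+v_{\xi_2})\bigr]\cdot\bigl[(\xi_1+v_{\xi_1})-(\xi_2+v_{\xi_2})\bigr]\,d\mP,
$$
which by {\bf h2} is non-negative and vanishes only when $\xi_1+v_{\xi_1}=\xi_2+v_{\xi_2}$ almost surely; in that case $v_{\xi_1}-v_{\xi_2}=\xi_2-\xi_1$ would be a nonzero constant in $\mathcal{G}(\Omega)$, contradicting the mean-zero property. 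For continuity, given $\xi_n\to\xi$, I would derive uniform bounds in $L^{p(\cdot)}(\Omega)$ for $v_{\xi_n}+\xi_n$ and in $L^{p'(\cdot)}(\Omega)$ for $\mA(\cdot,\xi_n+v_{\xi_n})$ by testing (\ref{cellpbm}) with $v_{\xi_n}$ and invoking {\bf h3}, {\bf h4} together with a variable-exponent Young inequality; weak limits $v^*\in\mathcal{G}(\Omega)$ and $\chi\in\mathcal{G}^\perp(\Omega)$ then exist by reflexivity. A Minty-type argument identifies $v^*=v_\xi$: monotonicity combined with the cell equation yields $\int_\Omega\mA(\omega,\xi_n+\eta)\cdot(\eta-v_{\xi_n})\,d\mP\ge 0$ for every $\eta\in\mathcal{G}(\Omega)$; {\bf h1}, {\bf h4} and dominated convergence give $\mA(\cdot,\xi_n+\eta)\to\mA(\cdot,\xi+\eta)$ strongly in $L^{p'(\cdot)}(\Omega)$, allowing passage to the limit, and the standard perturbation $\eta=v^*+t\psi$, $t\to 0^+$, recovers the cell equation for $v^*$. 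Averaging delivers $A^{\rm eff}(\xi_n)\to A^{\rm eff}(\xi)$.

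For the bounds (\ref{f-ineq}), the central trick is that $v_\xi$ and the $f$-minimizer $w_\xi$ of (\ref{min_f}) both lie in $\mathcal{G}(\Omega)$, so orthogonality of the flux to $\mathcal{G}(\Omega)$ gives
$$
\int_\Omega\mA(\omega,\xi+v_\xi)\cdot(\xi+v_\xi)\,d\mP=A^{\rm eff}(\xi)\cdot\xi=\int_\Omega\mA(\omega,\xi+v_\xi)\cdot(\xi+w_\xi)\,d\mP.
$$
I would lower-bound the leftmost integrand by {\bf h3} and upper-bound the rightmost one by Young's inequality with a small constant $\delta$ combined with {\bf h4}; the uniform bounds $\alpha\le p(\omega)\le\beta$ let me control the $\omega$-dependent coefficients $\delta^{p'(\omega)}$ uniformly, and absorb $\int_\Omega|\xi+v_\xi|^{p(\omega)}\,d\mP$ into the left-hand side, arriving at $\int_\Omega|\xi+v_\xi|^{p(\omega)}\,d\mP\le C_1 f(\xi)+C_2$ (using $\int_\Omega|\xi+w_\xi|^{p(\omega)}\,d\mP\le\beta f(\xi)$ from the definition of $f$). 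The first inequality in (\ref{f-ineq}) then follows from $A^{\rm eff}(\xi)\cdot\xi\ge c_0\int_\Omega|\xi+v_\xi|^{p(\omega)}\,d\mP-c_0^{-1}$ together with $\int_\Omega|\xi+v_\xi|^{p(\omega)}\,d\mP\ge\alpha f(\xi)$ (since $w_\xi$ is the $f$-minimizer and $p(\omega)\ge\alpha$). For the second inequality, $\mA(\cdot,\xi+v_\xi)\in\mathcal{G}^\perp(\Omega)$ has mean $A^{\rm eff}(\xi)$, hence is an admissible competitor in $f^*(A^{\rm eff}(\xi))$; thus $f^*(A^{\rm eff}(\xi))\le\int_\Omega|\mA(\omega,\xi+v_\xi)|^{p'(\omega)}/p'(\omega)\,d\mP$, and {\bf h4} together with the already established bound on $\int_\Omega|\xi+v_\xi|^{p(\omega)}\,d\mP$ closes the argument.

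The main obstacle I anticipate is the two-sided control $\alpha f(\xi)\le\int_\Omega|\xi+v_\xi|^{p(\omega)}\,d\mP\le C f(\xi)+C$: since $v_\xi$ is not the $f$-minimizer, there is no direct comparison, and the workaround via orthogonality and a variable-exponent Young inequality requires careful tracking of the $\omega$-dependent Young constants so that the absorption step remains legitimate.
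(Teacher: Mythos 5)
Your proposal is correct, and three of its four components --- strict monotonicity (via testing the cell problem with $v_{\xi_1}-v_{\xi_2}$ and the mean-zero property of $\mathcal{G}(\Omega)$), the double Minty argument for weak continuity of $v_\xi$ and of the flux, and the first inequality in (\ref{f-ineq}) via $\int_\Omega \mathbf{A}(\omega,\xi+v_\xi)\cdot v_\xi\,d\mathbf{P}=0$ together with {\bf h3} and the fact that $v_\xi$ is a competitor in (\ref{min_f}) --- coincide with the paper's proof (you are in fact slightly more careful on monotonicity, explicitly excluding the degenerate case $\xi_1+v_{\xi_1}=\xi_2+v_{\xi_2}$ a.s., which the paper passes over). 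The one genuinely different step is the second inequality in (\ref{f-ineq}). The paper bounds $f^*(A^{\rm eff}(\xi))\le \int_\Omega|\mathbf{A}(\omega,\xi+v_\xi)|^{p'(\omega)}\,d\mathbf{P}\le c_2\,A^{\rm eff}(\xi)\cdot\xi+c_3$ and then absorbs at the level of the effective functionals, invoking the Fenchel--Young inequality $A^{\rm eff}(\xi)\cdot\xi\le\gamma f^*(A^{\rm eff}(\xi))+C(\gamma)f(\xi)$, whose $\gamma$-dependent form rests on the scaling property of Lemma \ref{l_delta2} (and its counterpart for $f^*$). You instead exploit the identity $A^{\rm eff}(\xi)\cdot\xi=\int_\Omega\mathbf{A}(\omega,\xi+v_\xi)\cdot(\xi+w_\xi)\,d\mathbf{P}$ with the $f$-minimizer $w_\xi$, apply a pointwise variable-exponent Young inequality (uniform in $\omega$ thanks to $\alpha\le p\le\beta$), and absorb $\int_\Omega|\xi+v_\xi|^{p(\omega)}\,d\mathbf{P}$ into the coercive left-hand side. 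This bypasses Lemma \ref{l_delta2} entirely for this estimate and yields as a by-product the quantitative two-sided bound $\alpha f(\xi)\le\int_\Omega|\xi+v_\xi|^{p(\omega)}\,d\mathbf{P}\le C_1f(\xi)+C_2$, which is worth having on its own; the paper's route is shorter once Lemma \ref{l_delta2} is in place. The only point to tighten is the end of your continuity argument: to conclude $A^{\rm eff}(\xi_n)\to A^{\rm eff}(\xi)$ you must also identify the weak limit $\chi$ of the fluxes as $\mathbf{A}(\cdot,\xi+v_\xi)$ (a second Minty pass, as in the paper), not only $v^*=v_\xi$; you set this up but do not state it explicitly.
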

\begin{proof}
Considering problem (\ref{cellpbm}) and {\bf h3.}, we have
$$
A^{\rm eff}(\xi)\cdot\xi=\int\limits_\Omega \mathbf{A}(\omega,\xi+v_\xi(\omega))\cdot\xi\,d\mathbf{P}=\int\limits_\Omega \mathbf{A}(\omega,\xi+v_\xi(\omega))\cdot(\xi+v_\xi(\omega))\,d\mathbf{P}
$$
$$
\geq c_0\int\limits_\Omega |\xi+v_\xi(\omega)|^{p(\omega)}\,d\mathbf{P}-c_0^{-1}\geq c_0f(\xi)-c_0^{-1}.
$$
This gives the first inequality in (\ref{f-ineq}).  To justify the second one we notice that $ \mathbf{A}(\xi+v_\xi)\in \mathcal{G}^\perp(\Omega)$,
and $\int_\Omega \mathbf{A}(\xi+v_\xi)\,d\mathbf{P}=A^{\rm eff}(\xi)$. Therefore, by the definition of $f^*$,
$$
\begin{array}{c}
\displaystyle
f^*(A^{\rm eff}(\xi))\leq\int\limits_{\Omega}|\mathbf{A}(\omega,\xi+v_\xi(\omega))|^{p'(\omega)}\,d\mathbf{P}\\[2mm]
\leq \displaystyle
c_2\int\limits_{\Omega}|\mathbf{A}(\omega,\xi+v_\xi(\omega))\cdot(\omega,\xi+v_\xi(\omega))\,d\mathbf{P}+c_3\\[2mm]
=c_2A^{\rm eff}(\xi)\cdot\xi+c_3\leq c_2\big(\gamma f^*(A^{\rm eff}(\xi))+C(\gamma)f(\xi)\big)+c_3;
\end{array}
$$
here we have also used {\bf h4.}, {\bf h3.},  the Young inequality and Lemma \ref{l_delta2}.
Choosing in the last expression $\gamma=(2c_2)^{-1}$,  we obtain the second estimate in (\ref{f-ineq}).

\bigskip
Strict monotonicity of $A^{\rm eff}(\xi)$ is an immediate consequence of the strict monotonicity of $\mathbf{A}(\omega,\xi)$ and the definition of $A^{\rm eff}$.
Indeed,
$$
(A^{\rm eff}(\xi_1)-A^{\rm eff}(\xi_2))\cdot(\xi_1-\xi_2)=\int_\Omega\big(\mathbf{A}(\omega,\xi_1+v_{\xi_1}(\omega))-\mathbf{A}(\omega,\xi_2+v_{\xi_2}(\omega))\big)
\cdot(\xi_1-\xi_2)\,d\mathbf{P}
$$
$$
=\int_\Omega\big(\mathbf{A}(\omega,\xi_1+v_{\xi_1}(\omega))-\mathbf{A}(\omega,\xi_2+v_{\xi_2}(\omega))\big)
\cdot(\xi_1+v_{\xi_1}(\omega)-(\xi_2+v_{\xi_2}(\omega)))\,d\mathbf{P}>0.
$$

In order to prove weak continuity of $A(\xi+v_\xi(\cdot))$ we first show that $v_\xi(\cdot)$ is a weakly continuous in $\xi$
function with values in $L^{p(\cdot)}(\Omega,\mathbb R^d)$. To this end we consider a sequence $\xi_j$ that converges to $\xi$ and notice that, due to condition {\bf h3.},
we have $\|v_{\xi_j}\|_{L^p(\cdot)}\leq C$.  Then for a subsequence $v_{\xi_j}$ converges to some $\eta\in\mathcal{G}(\Omega)$ weakly in $L^{p(\cdot)}(\Omega,\mathbb R^d)$.  By monotonicity, for any $\zeta\in \mathcal{G}(\Omega)$ it holds
$$
\int_\Omega\mathbf{A}(\omega, \xi_j+\zeta)\cdot(v_{\xi_j}-\zeta)\,d\mathbf{P}=
\int_\Omega\big(\mathbf{A}(\omega, \xi_j+\zeta)-\mathbf{A}(\omega, \xi_j+v_{\xi_j})\big)\cdot(v_{\xi_j}-\zeta)\,d\mathbf{P}\leq 0.
$$
From {\bf h1.} and {\bf h4.} we deduce by the Lebesgue theorem that $\mathbf{A}(\omega, \xi_j+\zeta)\to \mathbf{A}(\omega, \xi+\zeta)$
strongly in $L^{p'(\cdot)}(\Omega,\mathbb R^{d(d+1)/2})$.  Passing to the limit $j\to\infty$ in the last inequality yields
$$
\int_\Omega\mathbf{A}(\omega, \xi+\zeta(\omega))\cdot(\eta-\zeta(\omega))\,d\mathbf{P}\leq 0.
$$
This implies with the help of Minty's argument that $\eta$ is a solution of problem (\ref{cellpbm}). Since a solution of  (\ref{cellpbm}) is unique,
$\eta=v_\xi$. Therefore, $v_{\xi_j}$ converges to $v_\xi$.

Denote by $z$ a weak limit (for a subsequence) of $\mathbf{A}(\cdot,\xi_j+v_{\xi_j}(\cdot))$, as $j\to\infty$. Since $z\in \mathcal{G}^\perp(\Omega)$,
$$
\int_\Omega z\cdot v_\xi\,d\mathbf{P}=0,\qquad  \int_\Omega z\cdot \zeta\,d\mathbf{P}=0\quad\hbox{for all }\zeta\in \mathcal{G}(\Omega).
$$
By monotonicity,
$$
\int_\Omega \big(\mathbf{A}(\omega, \xi_j+v_{\xi_j}(\omega))-\mathbf{A}(\omega, \xi_j+\zeta(\omega))\big)\cdot(v_{\xi_j}(\omega)-\zeta(\omega))       \,d\mathbf{P}\geq0
$$
Passing to the limit $j\to\infty$ we get
$$
\int_\Omega \big(z-\mathbf{A}(\omega, \xi+\zeta(\omega))\big)\cdot(v_\xi(\omega)-\zeta(\omega))       \,d\mathbf{P}\geq0
$$
Using one more time Minty's technique we conclude that $z=\mathbf{A}(\omega, \xi+v_\xi(\omega))$.

\end{proof}


\bigskip
The homogenized problem reads
\begin{equation}\label{pbm_eff}
\left\{
\begin{array}{l}\displaystyle
\frac{\partial u}{\partial t}-{\rm div}\big( A^{\rm{eff}}(Du)\big)+{\rm div}(u\otimes u)+\nabla \pi=0,\quad (x,t)\in Q_T,\\[3mm]
{\rm div}\,u=0,\qquad u|_{\partial G}=0,\qquad u|_{t=0}=u_0,
\end{array}
\right.
\end{equation}
We say that a vector function $u\in X^f(Q_T)\cap L^\infty((0,T), H)$ is a solution of problem (\ref{pbm_eff}) if
\begin{itemize}
\item[(i)]
for any $\varphi\in C_{0,{\rm sol}}^\infty(G)$ and for any $t',t''\in[0,T]$ it holds
$$
\int_G [u(x,t'')-u(x,t')]\cdot\varphi(x)\,dx+\int_{t'}^{t''}\!\int_G\big[A^{\rm{eff}}(Du)-u\otimes u\big]
\cdot D\varphi\,dxdt=0;
$$
\item[(ii)]
$$
\lim\limits_{t\to+0} \int_G u(x,t)\cdot\varphi(x)\,dx=\int_G u_0(x)\cdot\varphi(x)\,dx
$$
\item[(iii)]
the  inequality
$$
\frac{1}{2}
\int_G [u(x,t'')\cdot u(x,t'')-u(x,t')\cdot u(x,t')]\,dx+\int_{t'}^{t''}\!\!\int_G A^{\rm{eff}}(Du)\cdot D u\,dxdt\leq 0
$$
holds for almost all $t',\, t''\in[0,T]$.
\end{itemize}

We proceed with the main homogenization result of this work.

\begin{thm}\label{t_main-homo}
Assume that
$$
\beta <\alpha^*=\left\{\begin{array}{ll}
\displaystyle
\frac{\alpha d}{d-\alpha},\qquad &\hbox{if }\alpha<d,\\[3mm]
+\infty,\qquad &\hbox{if }\alpha\geq d
\end{array}\right.
$$
Then almost surely, as $\eps\to 0$,  any limit point $u$ of the family $u^\eps$ is a solution of the homogenized problem (\ref{pbm_eff}) .
\end{thm}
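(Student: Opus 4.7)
The plan is to combine the weak convergences of Lemma \ref{l_wconv} with a Minty-type monotonicity argument driven by realizations of the cell problem (\ref{cellpbm}). Passing to the limit in item (i) of the $\eps$-problem is immediate once the strong $L^{2}(Q_T)$ convergence of $u^\eps$ is invoked for the convective term, producing relation (\ref{lim_ii}) with an unknown $z^0$ in place of $A^{\rm{eff}}(Du)$; item (ii) of the homogenized problem then follows from the same strong convergence, and item (iii) drops out of the $\eps$-level energy inequality by weak lower semicontinuity once $z^0=A^{\rm{eff}}(Du)$ has been established. Thus the whole proof reduces to this identification.

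The first ingredient would be the sharp upper bound
\begin{equation}\label{plan_limsup}
\limsup_{\eps\to 0}\int_{t'}^{t''}\!\!\int_G A(x/\eps,Du^\eps)\cdot Du^\eps\,dx\,dt\ \leq\ \int_{t'}^{t''}\!\!\int_G z^0\cdot Du\,dx\,dt.
\end{equation}
Estimate (\ref{f-ineq}) applied to $A^\eps\cdot Du^\eps$ combined with Fatou's lemma places $u$ in $X^{f}(Q_T)$, and by Lemma \ref{l_dens-sol} one may test (\ref{lim_ii}) against smooth solenoidal approximations of $u$ itself. The convective contribution equals $\tfrac12\int_G u\cdot\nabla|u|^{2}\,dx$ and vanishes by incompressibility and the boundary condition; the restriction $\beta<\alpha^{*}$ together with Lemma~\ref{l_emb_imp} is what makes this manipulation legitimate in the present Orlicz framework. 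Comparing the resulting energy identity for the limit problem with the $\eps$-level inequality (iii) and exploiting weak lower semicontinuity of $\|u^\eps(\cdot,t)\|_{L^{2}}^{2}$ yields (\ref{plan_limsup}).

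For the Minty step, pick a smooth compactly supported matrix-valued field $\Xi(x,t)$ and set $w^\eps(x,t;\omega)=\Xi(x,t)+v_{\Xi(x,t)}(\tau_{x/\eps}\omega)$, where $v_\xi\in\mathcal{G}(\Omega)$ solves (\ref{cellpbm}). By monotonicity \textbf{h2.},
\begin{equation*}
\int_{Q_T}\bigl[A(x/\eps,w^\eps)-A(x/\eps,Du^\eps)\bigr]\cdot\bigl[w^\eps-Du^\eps\bigr]\,dx\,dt\ \geq\ 0.
\end{equation*}
Letting $\eps\to 0$ term by term: the diagonal $A(x/\eps,w^\eps)\cdot w^\eps$ converges in $L^{1}$ to $A^{\rm{eff}}(\Xi)\cdot\Xi$ by Birkhoff's ergodic theorem together with the identity $\int_\Omega \mathbf{A}(\omega,\Xi+v_\Xi)\cdot v_\Xi\,d\mathbf{P}=0$ (namely (\ref{cellpbm}) tested against $v_\Xi$); the diagonal $A(x/\eps,Du^\eps)\cdot Du^\eps$ is controlled from above by (\ref{plan_limsup}); and the two cross terms are handled by a generalized div--curl argument, using that $A(x/\eps,w^\eps)$ is the realization of a field in $\mathcal{G}^\perp(\Omega)$ while $Du^\eps$ sits on top of a solenoidal vector, so the oscillations decouple and the products pass to the products of the weak limits. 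Collecting everything yields
\begin{equation*}
\int_{Q_T}\bigl[A^{\rm{eff}}(\Xi)-z^{0}\bigr]\cdot\bigl[\Xi-Du\bigr]\,dx\,dt\ \geq\ 0,
\end{equation*}
and the classical Minty step $\Xi=Du+\lambda\Phi$, $\lambda\to 0^{\pm}$, together with continuity of $A^{\rm{eff}}$ (Lemma \ref{l_eff-tenz}), forces $z^{0}=A^{\rm{eff}}(Du)$ a.e.\ in $Q_T$.

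The main obstacle will be the generalized div--curl passage in the cross terms. The realization $w^\eps$ is only \emph{asymptotically} a symmetrized gradient of a solenoidal vector (the $(x,t)$-dependence of $\Xi$ spoils exact solenoidality), and the genuinely variable exponent $p(\omega)$ rules out the standard $L^{p}$ div--curl lemma; the argument must be run in the dual pair $L^{p(\cdot)}$ versus $L^{p'(\cdot)}$, using compactness of $\Omega$ to obtain uniform ergodic averaging over balls and the weak continuity of $\xi\mapsto\mathbf{A}(\cdot,\xi+v_\xi)$ from Lemma \ref{l_eff-tenz} to cope with the dependence of $\Xi$ on $(x,t)$. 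A secondary technical point is the almost-sure selection in Lemma \ref{l_wconv}: the subsequence a priori depends on $\omega$, but the Minty identification pins any limit point down to the same deterministic homogenized system, which is precisely the content of the theorem.
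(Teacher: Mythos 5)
Your reduction of the theorem to the identification $z^0=A^{\rm eff}(Du)$ is exactly the paper's reduction, and the limsup bound on $\int A(x/\eps,Du^\eps)\cdot Du^\eps$ is indeed the first key ingredient. But your Minty step follows a genuinely different route: you realize the correctors as oscillating test fields $w^\eps=\Xi(x,t)+v_{\Xi(x,t)}(\tau_{x/\eps}\omega)$ and close with Birkhoff plus a div--curl lemma. The paper never realizes the correctors at all. It works at the two-scale level (Section \ref{s_2scale-conv}, Lemma \ref{l_sol-2sca}): $Du^\eps$ two-scale converges to $Du+u_1$ with $u_1(x,t,\cdot)\in\mathcal{G}(\Omega)$, and $A(\cdot/\eps,Du^\eps)$ to some $z(x,t,\cdot)\in\mathcal{G}^\perp(\Omega)$; the monotonicity inequality is tested only against \emph{smooth} $\Phi\in C_0^\infty(G,\mathcal{D}^\infty(\Omega))$, Minty yields the pointwise-in-$\omega$ identity $z=\mathbf{A}(\omega,Du+u_1)$, and only afterwards does $z\in\mathcal{G}^\perp$ force $u_1$ to solve the cell problem (\ref{cellpbm}) with $\xi=Du$, whence $z_0=A^{\rm eff}(Du)$. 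This ordering is what makes the argument close, and it exposes a genuine gap in your version: $v_\xi$ is merely an element of the closure $\mathcal{G}(\Omega)\subset L^{p(\cdot)}(\Omega)$, so $v_{\Xi(x,t)}(\tau_{x/\eps}\omega)$ is not an admissible oscillating test function, the null set in the ergodic theorem for the diagonal term $A(x/\eps,w^\eps)\cdot w^\eps$ depends on $\xi$ (problematic for uncountably many values of $\Xi(x,t)$), and Lemma \ref{l_eff-tenz} only gives \emph{weak} continuity of $\xi\mapsto\mathbf{A}(\cdot,\xi+v_\xi)$, which does not pass to the limit in the quadratic quantity $\mathbf{A}(\cdot,\xi+v_\xi)\cdot(\xi+v_\xi)$. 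As written, this step would not go through without a substantial additional approximation argument.

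The second gap is in your derivation of the limsup bound. There is no ``energy identity for the limit problem'' to compare with: $u$ is not an admissible test function in time at this regularity (this is precisely why the theory only asserts an energy \emph{inequality}, cf.\ the remark after Lemma \ref{l_emb_imp}), so the global cancellation of the convective term via $\frac12\int_G u\cdot\nabla|u|^2\,dx=0$ is not justified. The paper instead subtracts (i) from (iii) with a \emph{time-independent} test function $\eta\in C_{0,{\rm sol}}^\infty(G)$, lets $\eta=\eta_N\to u(\cdot,t_0)$ in $W_0^{1,f}(G)$ using Lemma \ref{l_dens-sol}, and localizes on $[t_0,t_0+h]$ with $h\to0$ at Lebesgue points $t_0$; the convective cancellation is then performed on the fixed time slice $t=t_0$, where it is legitimate thanks to Lemma \ref{l_emb_imp} and the hypothesis $\beta<\alpha^*$, and the resulting bound is then converted into the two-scale form via the identity $\int z_0\cdot Du=\int\!\int z\cdot(Du+u_1)$. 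If you repair these two points --- most economically by replacing the corrector-realization step with the two-scale Minty argument --- your proof closes and coincides in substance with the paper's.
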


\begin{remark}\label{r_rand}
Notice that the previous theorem does not state that the limit function is deterministic. Although the limit problem is not random, a solution need not be
unique. Then, the limit points of $u^\eps$ might be distinct for different realizations.
\end{remark}

\section{Stochastic two-scale convergence}\label{s_2scale-conv}

We first recall the definition of stochastic two-scale convergence.  Let $\{v^\eps=v^\eps(x,t,\widetilde\omega), \ 0<\eps\leq\eps_0\}$ be a family of functions such that
for ${\bf P}$ almost all  $\widetilde\omega\in\Omega$ we have  $v^\eps(\cdot,\cdot,\widetilde\omega)\in L^p(Q_T)$ for all $\eps\in (0,\eps_0]$.
\begin{definition}\label{d_stoch-2s}
We say that the family $v^\eps\in L^p(Q_T)$ weakly stochastic two-scale converges, as $\eps\to0$, to a function $v=v(x,t,\omega)$, $v\in L^p(Q_T\times\Omega)$,
if a.s.
\begin{equation}\label{Lp_bou}
\limsup\limits_{\eps\to0}\|v_\eps\|_{L^p(Q_T)}<\infty,
\end{equation}
and for any $\varphi\in C_0^\infty(Q_T)\times\mathcal{D}^\infty(\Omega)$ it holds
$$
\lim\limits_{\eps\to0}\int_{Q_T}v^\eps(x,t) \varphi^\eps(x,t)\,dxdt\longrightarrow
\int_{Q_T}\int_\Omega v(x,t,\omega)\varphi(x,t,\omega)\,dxdtd\mP,
$$
where $\varphi^\eps(x,t)=\varphi(x,t,\tau_{x/\eps}\omega)$.
\end{definition}

Notice that the two-scale limit function might also depend on the realization of the medium $\widetilde\omega$. Observe also that although the two-scale limit is defined separately for each typical realization of the medium, that is for a given $\widetilde\omega$, the limit
function is defined on the whole $\Omega$. We do not indicate the dependence on
$\widetilde\omega$  explicitly.

\bigskip\noindent
We recall some of the main properties of stochastic two-scale convergence (see \cite{Zh_Pi})
that are used in the further analysis.
For the reader convenience we  provide a proof of  these statements.

\begin{lemma}\label{l_2scomp} Every family of functions $\{v^\eps, \ \eps>0\}$ such that (\ref{Lp_bou}) holds, weakly two-scale converges  for a
subsequence to some $v=v(x,t,\omega)$,   $v\in L^p(Q_T\times\Omega)$.
\end{lemma}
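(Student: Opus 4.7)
The plan is to view each $v^\eps$ as a linear functional on a countable dense set of admissible test functions, extract a subsequence along which all these functionals converge (by a standard diagonal argument), and then identify the limit with a function $v\in L^p(Q_T\times\Omega)$ via Riesz representation. The key technical ingredient is Birkhoff's ergodic theorem, which controls the $L^{p'}(Q_T)$-norm of the oscillating test function $\varphi^\eps$ in terms of the $L^{p'}(Q_T\times\Omega)$-norm of $\varphi$.

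First I would pick a countable set $\{\varphi_k\}\subset C_0^\infty(Q_T)\otimes \mathcal{D}^\infty(\Omega)$ that is dense in $L^{p'}(Q_T\times\Omega)$; such a set exists because both factors contain countable dense subsets (the smoothness of realizations of elements of $\mathcal D^\infty$ guarantees they do lie in $L^{p'}(\Omega)$, and Lemma \ref{l_orl-realiz} gives the realization property). For each fixed $\varphi_k$, Birkhoff's theorem applied to the stationary field $|\varphi_k(x,t,\tau_y\omega)|^{p'}$ gives, almost surely,
$$
\lim_{\eps\to 0}\int_{Q_T}|\varphi_k(x,t,\tau_{x/\eps}\omega)|^{p'}\,dxdt
=\int_{Q_T}\!\int_\Omega |\varphi_k(x,t,\omega)|^{p'}\,dxdt\,d\mP(\omega).
$$
Intersecting the countably many exceptional null sets yields one common full-measure set $\Omega_0$ on which this convergence holds for every $k$.

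Next, fix $\widetilde\omega\in\Omega_0$. By the Hölder inequality and hypothesis (\ref{Lp_bou}),
$$
\Big|\int_{Q_T}v^\eps(x,t)\varphi_k^\eps(x,t)\,dxdt\Big|\le \|v^\eps\|_{L^p(Q_T)}\,\|\varphi_k^\eps\|_{L^{p'}(Q_T)}\le C\|\varphi_k\|_{L^{p'}(Q_T\times\Omega)},
$$
uniformly in $\eps$. By a Cantor diagonal extraction over $k$, pass to a subsequence $\eps_n\to 0$ along which
$$
\int_{Q_T}v^{\eps_n}\varphi_k^{\eps_n}\,dxdt\ \longrightarrow\ \ell(\varphi_k)
$$
exists for every $k$. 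Density of $\{\varphi_k\}$ combined with the uniform bound above extends $\ell$ to a bounded linear functional on $L^{p'}(Q_T\times\Omega)$ of norm at most $C$, and the same limit relation then holds for every $\varphi\in C_0^\infty(Q_T)\otimes\mathcal{D}^\infty(\Omega)$ after an approximation in $L^{p'}(Q_T\times\Omega)$ (the approximation error on the oscillating side is controlled again by Birkhoff).

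By the Riesz representation theorem there exists $v\in L^p(Q_T\times\Omega)$ with $\|v\|_{L^p(Q_T\times\Omega)}\le C$ such that $\ell(\varphi)=\int_{Q_T}\!\int_\Omega v\varphi\,dxdt\,d\mP$, which is precisely Definition \ref{d_stoch-2s}. The main subtle point I expect to wrestle with is the independence of the exceptional null set on the test function: this is exactly where countable density of $\{\varphi_k\}$ is essential, so that a single full-measure $\Omega_0$ serves every admissible $\varphi$. A secondary, purely technical point is verifying that the approximation from $\{\varphi_k\}$ to a general $\varphi\in C_0^\infty\otimes\mathcal D^\infty$ commutes with the limit $\eps\to 0$; this reduces to a triangle-inequality argument using the uniform $L^{p'}$ control of $\varphi^\eps$ supplied by Birkhoff's theorem.
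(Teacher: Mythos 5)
Your proposal is correct and follows essentially the same route as the paper: a H\"older bound on the pairing $\int_{Q_T} v^\eps\varphi^\eps$ controlled via Birkhoff's ergodic theorem, a diagonal extraction over a countable family of test functions, identification of the limit as a bounded linear functional on $L^{p'}(Q_T\times\Omega)$ represented by some $v\in L^p(Q_T\times\Omega)$, and a density argument to extend to all admissible test functions. Your explicit handling of the common null set $\Omega_0$ is a point the paper leaves implicit, but the argument is the same.
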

\begin{proof}
With the help of the Birkhoff ergodic theorem we obtain that for any $\varphi\in C_0^\infty(Q_T)$, $\phi\in\mathcal{D}^\infty(\Omega)$
and for almost all $\widetilde\omega\in\Omega$
$$
\limsup\limits_{\eps\to
0}\left|\int\limits_{Q_T}v^\eps(x)\varphi(x)
\phi(\tau_\frac{x}{\eps}\tilde\omega)dx
\right|\le
$$
$$
\le\limsup\limits_{\eps\to 0}\|v^\eps\|_{L^p(Q_T)}
\left(\int\limits_{Q_T}|\varphi(x)|^q\,|\phi(\tau_\frac{x}{\eps}\tilde\omega)|^q
dx\right)^\frac{1}{q}\le
$$
$$
\le C_{\tilde\omega}\lim\limits_{\eps\to
0}\left(\int\limits_{Q_T}|\varphi(x)|^q\phi(\tau_\frac{x}{\eps}\tilde\omega)|^q
dx\right)^\frac{1}{2}=C_{\tilde\omega}
\left(\int\limits_{Q_T}\int\limits_\Omega |\varphi(x)|^q
|\phi(\omega)|^q d\mathbf{P}(\omega)dx\right)^\frac{1}{q}\!\!\!.
$$
Using the diagonal procedure we can choose a subsequence $\eps_j\to0$
such that the limit $\lim\limits_{\eps_j\to
0}\int\limits_{Q_T}v^\eps(x)\varphi(x)
\phi(\tau_\frac{x}{\eps}\tilde\omega)dx$ exists for each $\varphi$ and $\phi$.
It immediately follows from the last formula that this limit defines a linear
bounded functional on $L^q(Q_T\times\Omega)$. Therefore, there exists a function
$v\in L^p(Q_T\times\Omega)$ such that
$$
\lim\limits_{\eps\to
0}\int\limits_{Q_T}v^\eps(x)\varphi(x)
\phi(\tau_\frac{x}{\eps}\tilde\omega)dx=\int\limits_{Q_T}\int\limits_\Omega
v(x,t,\omega)\varphi(x)\phi(\omega)dxd{\bf P}.
$$
  By the density arguments the last relation also holds for any test function
$\varphi\in C_0^\infty(Q_T)\times\mathcal{D}^\infty(\Omega)$. This completes the proof.
\end{proof}

\begin{lemma}\label{l_ngue1} Let a family $v^\eps$ be such that a.s.
$$
\|v^\eps\|_{L^p(Q_T)}\leq C,\qquad \lim\limits_{\eps\to0}\ \eps\|\nabla_x v^\eps\|_{L^p(Q_T)}=0.
$$
Then, for a subsequence,
$$
v^\eps \mathop{\rightharpoonup}\limits^{2} v \qquad\hbox{weakly two-scale in }L^p(Q_T),
$$
with $v=v(x,t)$, $v\in L^p(Q_T)$.
\end{lemma}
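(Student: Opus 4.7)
The plan is to apply Lemma \ref{l_2scomp} to extract a subsequence along which $v^\eps$ weakly two-scale converges to some $v = v(x,t,\omega) \in L^p(Q_T\times\Omega)$, and then show that the smallness condition $\eps \|\nabla_x v^\eps\|_{L^p(Q_T)} \to 0$ forces the $\omega$-dependence to disappear.

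First I would test against functions of the special form $\varphi(x,t)(\partial_i\phi)(\tau_{x/\eps}\omega)$ with $\varphi \in C_0^\infty(Q_T)$ and $\phi \in \mathcal{D}^\infty(\Omega)$, exploiting the fundamental identity
$$\partial_{x_i}\bigl[\phi(\tau_{x/\eps}\omega)\bigr] = \eps^{-1}(\partial_i\phi)(\tau_{x/\eps}\omega).$$
This rewrites the test integral as $\eps \int_{Q_T} v^\eps \varphi \,\partial_{x_i}[\phi(\tau_{x/\eps}\omega)]\,dxdt$. Integrating by parts in $x_i$ (no boundary terms because $\varphi$ is compactly supported in $Q_T$) gives two contributions: one of the form $-\eps \int \varphi\,\phi(\tau_{x/\eps}\omega)\,\partial_{x_i} v^\eps$, which vanishes by the hypothesis $\eps\|\nabla_x v^\eps\|_{L^p}\to 0$ paired with the $L^{p'}$-boundedness of $\varphi\phi(\tau_{x/\eps}\omega)$; and a second of the form $-\eps\int v^\eps(\partial_{x_i}\varphi)\phi(\tau_{x/\eps}\omega)$, which vanishes because $v^\eps$ is bounded in $L^p$ and the explicit prefactor $\eps\to 0$.

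Passing to the two-scale limit on the left-hand side then yields
$$\int_{Q_T}\int_\Omega v(x,t,\omega)\,\varphi(x,t)\,(\partial_i\phi)(\omega)\,dxdt\,d\mathbf{P} = 0$$
for every admissible $\varphi$, $\phi$ and every coordinate direction $i$. By arbitrariness of $\varphi$, for a.e.\ $(x,t)$ we obtain $\int_\Omega v(x,t,\cdot)\,\partial_i\phi\,d\mathbf{P}=0$ for all $\phi\in\mathcal{D}^\infty$, i.e.\ $\partial_i v(x,t,\cdot)=0$ in the distributional sense on $\Omega$. Hence $v(x,t,\cdot)$ is invariant under the group $\tau$, and by the ergodicity assumption $v(x,t,\omega)=\bar v(x,t)$ almost surely, so $v\in L^p(Q_T)$.

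The main technical point is justifying the integration by parts at the low regularity level $v^\eps\in L^p$ with only the weak bound on $\eps\nabla_x v^\eps$. Since the realizations of $\phi\in\mathcal{D}^\infty$ are almost surely smooth, the product $\varphi(x,t)\phi(\tau_{x/\eps}\omega)$ is a legitimate test function in $W^{1,p'}_0(Q_T)$, and the argument reduces to a standard duality pairing between $\nabla_x v^\eps$ and smooth compactly supported fields. The only other subtlety worth recording is that the a.s.\ null set on which the above limits fail may depend on the countable collection of test pairs $(\varphi,\phi)$ chosen, but a standard diagonalization over a countable dense family (as already used in the proof of Lemma \ref{l_2scomp}) removes this issue.
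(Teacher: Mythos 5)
Your argument is correct and follows essentially the same route as the paper: pair $\eps\nabla_x v^\eps$ with an oscillating test function built from $\mathcal{D}^\infty(\Omega)$, use the identity $\partial_{x_i}[\phi(\tau_{x/\eps}\omega)]=\eps^{-1}(\partial_i\phi)(\tau_{x/\eps}\omega)$ and integration by parts, and let the hypothesis kill both remainder terms, leaving $\int_\Omega v(x,t,\cdot)\,\partial_i\phi\,d\mathbf{P}=0$ for all $\phi$ and $i$. The only cosmetic difference is the final identification: the paper invokes density of $\{\mathrm{div}_\omega\phi\}$ in the mean-zero part of $L^q(\Omega)$, whereas you argue that all generalized derivatives of $v(x,t,\cdot)$ vanish, hence $v$ is $\tau$-invariant and, by ergodicity, independent of $\omega$ --- two equivalent formulations of the same fact.
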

\begin{proof}
Choosing  a test function of the form
$\varphi(x,t)\phi(\tau_{x/\eps}\omega)$, we get for a subsequence
$$
0=\lim\limits_{\eps\to0}\int\limits_{Q_T} \eps\nabla_xv^\eps(x,t)\varphi(x,t)\phi(\tau_{x/\eps}\omega)\,dx
=-\lim\limits_{\eps\to0}\int\limits_{Q_T} v^\eps(x,t)\varphi(x,t)\mathrm{div}_\omega\phi(\tau_{x/\eps}\omega)\,dx
$$
$$
=-\int_{Q_T}\int_\Omega v(x,t,\omega)\varphi(x,t)\mathrm{div}_\omega\phi(\omega)
dxd{\bf P}.
$$
Therefore, for almost all $(x,t)\in Q_T$ we have
$$
\int_\Omega v(x,t,\omega)\mathrm{div}_\omega\phi(\omega)
d{\bf P}.
$$
in the same way as in \cite[Lemma 2.5]{Zh_Pi} one can show that the set $\{\mathrm{div}_\omega\phi\,:\,\phi\in\mathcal{D}^\infty\}$ is dense in the space of $L^q(\Omega)$
functions with zero average. Therefore, $v$ does not depend on $\omega$.
\end{proof}

\begin{lemma}\label{l_ngue2} Let a family $v^\eps$ satisfy a.s the estimate
$$
\|v^\eps\|_{L^p(Q_T)}+\|\nabla_x v^\eps\|_{L^p(Q_T)}\leq C
$$
for all $\eps\in(0,\eps_0]$.
Then, for a subsequence,
$$
\nabla_x v^\eps \mathop{\rightharpoonup}\limits^{2} \nabla_x v(x,t)+ v_1(x,t,\omega)\qquad\hbox{weakly two-scale
in }L^p(Q_T\times\Omega),
$$
with $v=v(x,t)$, $v\in L^p((0,T);W^{1,p}(G))$ and $v_1\in L^p(Q_T;L^p_{\rm pot}(\Omega))$, where $L^p_{\rm pot}(\Omega)$ is the closure in $L^p(\Omega)$
of the set $\{\partial_\omega u\,:\, u\in \mathcal{D}^\infty(\Omega)\}$.
\end{lemma}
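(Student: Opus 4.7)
The plan is to extract a two-scale limit of $\nabla_x v^\eps$ along a subsequence and then pin down its structure by testing against two complementary families of oscillating test functions. First, I would apply Lemma~\ref{l_2scomp} to the bounded family $\{\nabla_x v^\eps\}$ to obtain a weak two-scale limit $\xi\in L^p(Q_T\times\Omega;\mathbb R^d)$. Since $\eps\|\nabla_x v^\eps\|_{L^p(Q_T)}\to 0$, Lemma~\ref{l_ngue1} applied to $\{v^\eps\}$ gives a two-scale limit $v=v(x,t)$ independent of $\omega$; standard weak compactness in $L^p((0,T);W^{1,p}(G))$ then places $v$ in that space, with $\nabla_x v$ equal to the weak $L^p(Q_T)$ limit of $\nabla_x v^\eps$.

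Next I would identify the fiberwise mean $\int_\Omega \xi\,d\mathbf{P}=\nabla_x v$ by testing with an $\omega$-independent $\varphi\in C_0^\infty(Q_T;\mathbb R^d)$: integration by parts gives $\int\nabla_x v^\eps\cdot\varphi\,dxdt=-\int v^\eps\,\mathrm{div}_x\varphi\,dxdt$, whose limit is $\int\nabla_x v\cdot\varphi\,dxdt$ on the one hand (from the weak $L^p$ limit) and $\int_{Q_T}\!\!\int_\Omega\xi\cdot\varphi\,d\mathbf{P}dxdt$ on the other (from two-scale convergence). To capture the oscillating component $v_1:=\xi-\nabla_x v$, I would then use test functions $\varphi(x,t)\phi(\tau_{x/\eps}\omega)$ with $\phi\in(\mathcal{D}^\infty)^d$ satisfying $\mathrm{div}_\omega\phi=0$ and $\int_\Omega\phi\,d\mathbf{P}=0$. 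Integrating by parts in $x$, the singular $\eps^{-1}$ contribution carries a factor $\mathrm{div}_\omega\phi$ and hence vanishes, while the remaining term $\int v^\eps\nabla_x\varphi\cdot\phi(\tau_{x/\eps}\omega)\,dxdt$ converges, by the two-scale limit of $v^\eps$ and the zero mean of $\phi$, to $\bigl(\int v\,\nabla_x\varphi\,dxdt\bigr)\cdot\int_\Omega\phi\,d\mathbf{P}=0$. Equating the two sides yields, for a.e.\ $(x,t)$,
$$\int_\Omega v_1(x,t,\omega)\cdot\phi(\omega)\,d\mathbf{P}(\omega)=0$$
for every such $\phi$.

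The main obstacle is the closing Weyl--Helmholtz step: from this orthogonality one must deduce $v_1(x,t,\cdot)\in L^p_{\rm pot}(\Omega)$ a.e., which requires the $L^p(\Omega)$ duality statement that the annihilator of mean-zero solenoidal fields coincides with the closure of the gradients $\{\partial_\omega u:u\in\mathcal{D}^\infty\}$. This is the random-medium analogue of the classical periodic decomposition and is essentially contained in \cite{Zh_Pi, JKO}; its delicate ingredient is the density of $\{\mathrm{div}_\omega\phi:\phi\in(\mathcal{D}^\infty)^d\}$ in the mean-zero subspace of $L^q(\Omega)$, already invoked in the proof of Lemma~\ref{l_ngue1}. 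Once this is granted, the measurability of the section $v_1(x,t,\cdot)$ and its membership in $L^p(Q_T;L^p_{\rm pot}(\Omega))$ follow from the uniform $L^p(Q_T\times\Omega)$ bound on $\xi$.
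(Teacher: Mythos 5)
Your proposal is correct and follows essentially the same route as the paper: extract the two-scale limit $V$ of $\nabla_x v^\eps$, use Lemma~\ref{l_ngue1} to see the limit of $v^\eps$ is $\omega$-independent, test with $\varphi(x,t)\phi(\tau_{x/\eps}\omega)$ for solenoidal $\phi$ so that the $\eps^{-1}$ term drops, and conclude $v_1=V-\nabla_x v$ annihilates the solenoidal fields, hence lies in $L^p_{\rm pot}(\Omega)$ by the Weyl decomposition. The only cosmetic difference is that you split the test fields into constants and mean-zero solenoidal ones, while the paper handles all solenoidal $\phi$ at once; both leave the final duality/Weyl step to the cited literature.
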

\begin{proof}
According to the previous Lemma a two-scale limit of $v^\eps$ does not depend on $\omega$.
Denote by $V=V(x,t,\omega)$ the two-scale limit of $\nabla_x v^\eps$, and by $v=v(x,t)$ the two-scale limit of $v^\eps$. Since the two-scale convergence in $L^p(Q_T\times\Omega)$
implies the weak convergence in $L^p(Q_T)$, we have $v\in L^p(0,T;W^{1,p}(Q))$.  Taking
a test function $\varphi(x,t)\phi(\tau_{x/\eps}\omega)$ with $\mathrm{div}_\omega\phi=0$, we arrive at the following relation
$$
\int_{Q_T}\int_\Omega V(x,t,\omega)\varphi(x,t)\phi(\omega)dxd{\bf P}=
\lim\limits_{\eps\to0}\int_{Q_T}\nabla_xv^\eps(x,t)\varphi(x,t)\phi(\tau_{x/\eps}\omega)dx
$$
$$
=-\int_{Q_T}\int_\Omega v(x,t)\nabla_x\varphi(x,t)\phi(\omega)dxd{\bf P}=
\int_{Q_T}\int_\Omega \nabla_x v(x,t)\varphi(x,t)\phi(\omega)dxd{\bf P}.
$$
Denoting $v_1(x,t,\omega)=V(x,t,\omega)-\nabla_x v(x,t)$ we conclude that
for almost all $(x,t)\in Q_T$ and for any $\phi\in\mathcal{D}^\infty$ such that
$\mathrm{div}_\omega\phi=0$ it holds
$$
\int_\Omega v_1(x,t,\omega)\phi(\omega)d{\bf P}=0.
$$
This implies the desired statement.
\end{proof}

\bigskip\noindent
{\sl Example. Periodic case}.

\medskip\noindent
The periodic framework can be interpreted as a particular case of the random one. In this case $\Omega=[0,1)^d$, $\mathcal{F}$
is the Borel $\sigma$-algebra on $\Omega$, and $\mathbf{P}$ is the Lebesgue measure.   The dynamical system $\tau_y$
is the set of shifts on the torus, that is for any $\omega\in [0,1)^d$ we set $\tau_y\omega=\mathcal{I}(\omega+y)$,
where $\mathcal{I}(\omega+y)\in[0,1)^d$, and $(\omega+y)-\mathcal{I}(\omega+y)\in\mathbb Z^d$.
One can observe that in the periodic case for any $\omega_1$ and $\omega_2$ there exist $y\in\mathbb Z^d$ such that
$\omega_2=\tau_y\omega_1$. This property plays a crucial role in the analysis of periodic media.

In the periodic case
 Lemmas \ref{l_2scomp}--\ref{l_ngue2} are classical and can be found in \cite{nguetseng}, \cite{allaire}.

\bigskip
Considering a priori estimate (\ref{aprio}) and using the arguments from \cite{Zh_Pi} and \cite{Zh11,Zh09_fa}, one can justify the following statement:
\begin{lemma}\label{l_sol-2sca} For a subsequence,
$$
 u^\eps \mathop{\rightharpoonup}\limits^{2} u(x,t)\qquad\hbox{weakly two-scale in }L^{\alpha}(Q_T),
$$
$$
D u^\eps \mathop{\rightharpoonup}\limits^{2}D u(x,t)+ u_1(x,t,\omega)\qquad\hbox{weakly two-scale in }L^{\alpha}(Q_T\times\Omega),
$$
where $u_1(x,t,\cdot)\in \mathcal{G}(\Omega)$ a.a. in $Q_T$ and
\begin{equation}\label{vari_lsc}
\int_{Q_T}\int_{\Omega}|Du(x,t)+u_1(x,t,\omega)|^{p(\omega)}\,dxdtd\mP(\omega)<\infty;
\end{equation}
$$
A(\cdot/\eps,Du^\eps) \mathop{\rightharpoonup}\limits^{2} z(x,t,\omega)\qquad\hbox{weakly two-scale in }L^{\beta'}(Q_T\times\Omega),
$$
where
$$
\int_{Q_T}\int_\Omega |z(x,t,\omega)|^{p'(\omega)}\,dxdtd\mP(\omega)<\infty,
$$
$z(x,t,\cdot)\in \mathcal{G}^\perp(\Omega)$  a.a. in $Q_T$. Moreover,
$z_0(x,t)=\int_\Omega z(x,t,\omega)\,d\mP(\omega)$ with $z_0$ introduced in Lemma \ref{l_wconv}.
\end{lemma}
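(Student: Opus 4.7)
The plan is to combine the abstract two-scale compactness of Lemma \ref{l_2scomp} with structural identification results in the spirit of Lemmas \ref{l_ngue1}--\ref{l_ngue2}, and then read off the flux properties from the equation itself. From the a priori bound (\ref{aprio}) and Lemma \ref{l_apri} we have a.s.
$$
\|u^\eps\|_{L^\infty(0,T;L^2)}+\|Du^\eps\|_{L^\alpha(Q_T)}+\|A^\eps\|_{L^{\beta'}(Q_T)}\leq C(\omega),
$$
so by Korn's inequality $\nabla u^\eps$ is bounded in $L^\alpha(Q_T)$ as well. Applying Lemma \ref{l_2scomp} along a diagonal subsequence extracts joint two-scale limits $\bar u(x,t,\omega)$, $V(x,t,\omega)$ and $z(x,t,\omega)$ of $u^\eps$, $Du^\eps$ and $A^\eps$ respectively.

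Next I would identify the structure of $\bar u$ and $V$. Since $\eps\,\nabla u^\eps\to 0$ in $L^\alpha(Q_T)$, Lemma \ref{l_ngue1} gives that $\bar u$ is independent of $\omega$; testing against $\omega$-independent test functions and using the weak $L^2$ convergence in Lemma \ref{l_wconv} identifies $\bar u$ with $u$. For $Du^\eps$, an argument parallel to Lemma \ref{l_ngue2} (using test functions of the form $\eps\varphi(x,t)\phi(\tau_{x/\eps}\omega)$ and integrating by parts in $\omega$) shows $V(x,t,\omega)=Du(x,t)+u_1(x,t,\omega)$ for some $u_1$ orthogonal to $\omega$-constants. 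To see that $u_1(x,t,\cdot)\in\mathcal{G}(\Omega)$ one uses the solenoidal constraint $\mathrm{div}_x u^\eps=0$ combined with symmetry: testing $Du^\eps$ against vector fields $\theta^\eps$ whose two-scale limit lies in $\mathcal{G}^\perp(\Omega)$ shows $V$ has trivial projection onto $\mathcal{G}^\perp(\Omega)$, hence $u_1\in\mathcal{G}(\Omega)$ a.e.\ in $Q_T$. The integrability (\ref{vari_lsc}) then follows from lower semicontinuity of the functional $w\mapsto\int_{Q_T}\int_\Omega|w|^{p(\omega)}dxdtd\mP$ under two-scale convergence; this is a convexity argument combined with the realization formula in Lemma \ref{l_orl-realiz} applied to the uniform bound $\int_{Q_T}|Du^\eps|^{p_\eps(x)}\,dxdt\leq C$. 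The analogous $L^{p'(\omega)}$ bound on $z$ comes from assumption {\bf h4.} and the same lower semicontinuity.

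The core step, and expected main obstacle, is proving that $z(x,t,\cdot)\in\mathcal{G}^\perp(\Omega)$ a.e., i.e., $\int_\Omega z\cdot v\,d\mP=0$ for every $v\in\mathcal{G}(\Omega)$. By density it suffices to check this for $v=D_\omega\phi$ with $\phi\in(\mathcal{D}^\infty)^d$ and $\mathrm{div}_\omega\phi=0$. I would plug into the weak formulation (i) of (\ref{n-s_ini}) the test function
$$
\psi^\eps(x,t)=\eps\,\varphi(x,t)\phi(\tau_{x/\eps}\omega)+\eps^2 r^\eps(x,t),
$$
where $\varphi\in C_0^\infty(Q_T)$ and the lower-order corrector $r^\eps$ is chosen via the Bogovskii operator so that $\mathrm{div}_x\psi^\eps=0$; this is feasible because $\mathrm{div}_\omega\phi=0$ makes the leading-order divergence of $\eps\varphi\,\phi(\tau_{x/\eps}\omega)$ of order $\eps$, and the $L^\beta(G)$ norm of this divergence stays bounded uniformly, so $r^\eps$ is bounded and $\eps^2 r^\eps$ contributes only $o(1)$ terms. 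The $\eps$-prefactor kills the time-derivative term and the convective term $u^\eps\otimes u^\eps\cdot D\psi^\eps$ (the latter using Lemma \ref{l_emb_imp} together with the growth condition $\beta<\alpha^*$ in Theorem \ref{t_main-homo}). What survives in the viscous term is
$$
\int_{Q_T}A^\eps\cdot\big(\varphi(x,t)(D_\omega\phi)(\tau_{x/\eps}\omega)\big)\,dxdt+o(1),
$$
whose two-scale limit is $\int_{Q_T}\int_\Omega z(x,t,\omega)\cdot\varphi(x,t)D_\omega\phi(\omega)\,dxdtd\mP$. Since this vanishes for arbitrary $\varphi$, we obtain $\int_\Omega z(x,t,\cdot)\cdot D_\omega\phi\,d\mP=0$ a.e., establishing $z(x,t,\cdot)\in\mathcal{G}^\perp(\Omega)$.

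Finally, the identification $z_0=\int_\Omega z\,d\mP$ is immediate: applying the definition of two-scale convergence to $A^\eps$ with an $\omega$-independent test function $\varphi(x,t)$ yields exactly the weak $L^{\beta'}(Q_T)$ limit from Lemma \ref{l_wconv}, which was denoted $z_0$. The delicate point in the whole argument is the construction of the corrector $r^\eps$ and the control of the convective term under the $p(\cdot)$-growth setting; this is where the hypothesis $\beta<\alpha^*$ genuinely enters.
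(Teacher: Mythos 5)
Your overall architecture coincides with what the paper (implicitly) does: two-scale compactness from Lemma \ref{l_2scomp}, the structure Lemmas \ref{l_ngue1}--\ref{l_ngue2} for $u$ and $Du$, oscillating solenoidal test functions to get $z(x,t,\cdot)\in\mathcal{G}^\perp(\Omega)$, and $\omega$-independent test functions to identify $z_0=\int_\Omega z\,d\mathbf{P}$. The paper delegates all of that to its earlier lemmas and to \cite{Zh_Pi,Zh11,Zh09_fa}, and devotes its \emph{entire} written proof to the one point you dispatch in a single sentence: the bound (\ref{vari_lsc}) and its analogue for $z$. That is the genuine gap. Because the exponent is variable, the only available a priori information is the modular bound $\int_{Q_T}|Du^\eps|^{p_\eps(x)}\,dxdt\le C$ together with weak two-scale convergence in the \emph{fixed} space $L^\alpha$; the assertion that $w\mapsto\int_{Q_T}\int_\Omega|w|^{p(\omega)}\,dxdt\,d\mathbf{P}$ is lower semicontinuous along this convergence is precisely the statement that needs proof, and it is not a textbook convexity fact in this setting. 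The paper proves it by approximating $U=Du+u_1$ by a smooth $U^\gamma$, writing the convexity inequality for $|(1-\delta)U^\gamma(\tau_{x/\eps}\omega)+\delta Du^\eps|^{p_\eps}$, expanding to first order in $\delta$ uniformly in the arguments, passing to the limit via the Birkhoff theorem (for the $|U^\gamma|^{p}$ terms) and the definition of two-scale convergence (for the cross term $p|U^\gamma|^{p-2}U^\gamma\cdot Du^\eps$, which is admissible as a test function), and finishing with Fatou. You need to carry out this linearization, or at least cite a variable-exponent lower semicontinuity result explicitly; naming ``convexity plus Lemma \ref{l_orl-realiz}'' is not yet an argument.

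Two further slips in your $z\in\mathcal{G}^\perp(\Omega)$ step. First, the corrector scaling: $\mathrm{div}_x\big(\eps\varphi\,\phi(\tau_{x/\eps}\omega)\big)=\eps\nabla\varphi\cdot\phi(\tau_{x/\eps}\omega)$ is of order $\eps$, so the Bogovskii correction must have the form $\eps r^\eps$ with $r^\eps$ bounded in $W^{1,\beta}_0(G)$; a term $\eps^2 r^\eps$ with bounded $r^\eps$ has divergence $O(\eps^2)$ and cannot cancel an $O(\eps)$ quantity. Second, the $\eps$-prefactor does \emph{not} kill the convective term: $D_x\big(\eps\varphi\,\phi(\tau_{x/\eps}\omega)\big)$ contains the $O(1)$ part $\varphi\,(D_\omega\phi)(\tau_{x/\eps}\omega)$ --- the very part you retain in the viscous term --- so $\int_{Q_T}u^\eps\otimes u^\eps\cdot\varphi\,(D_\omega\phi)(\tau_{x/\eps}\omega)\,dxdt$ is $O(1)$. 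It does vanish in the limit, but for a different reason: $u^\eps\otimes u^\eps$ converges strongly (Aubin--Lions plus Lemma \ref{l_emb_imp}, which is where the integrability hypotheses actually enter) to the $\omega$-independent limit $u\otimes u$, while $(D_\omega\phi)(\tau_{x/\eps}\omega)$ converges weakly to its mean $\int_\Omega D_\omega\phi\,d\mathbf{P}=0$. As written, your treatment of the convective term is inconsistent with your treatment of the viscous term and should be replaced by this mean-zero argument.
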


\begin{proof}
The two-scale convergence follows from the previous Lemmas. We should justify (\ref{vari_lsc}) and similar estimate for $z$.
Denote for brevity $U(x,t,\omega)=D u(x,t)+ u_1(x,t,\omega)$. For any $\gamma>0$ consider
$U^\gamma\in C_0^\infty(Q_T)\times\mathcal{D}^\infty(\Omega)$ such that
$\|U-U^\gamma\|_{L^{\alpha}(Q_T\times\Omega)}\leq \gamma$. For any $\delta\in(0,1)$ by the convexity argument we have
\begin{equation}\label{tri_tri}
\int\limits_{Q_T}\big|(1-\delta)U^\gamma(t,x,\tau_{\frac x\eps}\omega)+
\delta Du^\eps(t,x)\big|^{p(\tau_{\frac x\eps}\omega)}dxdt
\end{equation}
$$
\leq (1-\delta)\int\limits_{Q_T} |U^\gamma(t,x,\tau_{\frac x\eps}\omega)|^{p(\tau_{\frac x\eps}\omega)}
dxdt +\delta\int\limits_{Q_T} |Du^\eps(t,x)|^{p(\tau_{\frac x\eps}\omega)}
dxdt.
$$
Using the inequality $|a+\delta b|^p-|a|^p-\delta p|a|^{p-2}ab=o(\delta)\,(|a|^p+|b|^p)$, as  $\delta\to0$, that holds uniformly
in $a$ and $b$, we obtain
$$
\big|(1-\delta)U^\gamma(t,x,\tau_{\frac x\eps}\omega)+
\delta Du^\eps(t,x)\big|^{p(\tau_{\frac x\eps}\omega)}= \big(1-\delta p(\tau_{\frac x\eps}\omega)\big)
|U^\gamma(t,x,\tau_{\frac x\eps}\omega)|^{p(\tau_{\frac x\eps}\omega)}
$$
$$
+\delta p(\tau_{\frac x\eps}\omega)|U^\gamma(t,x,\tau_{\frac x\eps}\omega)|^{p(\tau_{\frac x\eps}\omega)-2}
U^\gamma(t,x,\tau_{\frac x\eps}\omega)Du^\eps(t,x)
$$
$$
+o(\delta) \big(|U^\gamma(t,x,\tau_{\frac x\eps}\omega)|^{p(\tau_{\frac x\eps}\omega)}
+|Du^\eps(t,x)|^{p(\tau_{\frac x\eps}\omega)}\big).
$$
Integrating the last equality over $Q_T$ and combining the resulting relation with  \eqref{tri_tri} after straightforward rearrangements we obtain
$$
\int\limits_{Q_T} |Du^\eps(t,x)|^{p(\tau_{\frac x\eps}\omega)}\,dxdt\geq
\int\limits_{Q_T} \big(1-p(\tau_{\frac x\eps}\omega)\big)|U^\gamma(t,x,\tau_{\frac x\eps}\omega)|^{p(\tau_{\frac x\eps}\omega)}
dxdt
$$
$$
+\int\limits_{Q_T} p(\tau_{\frac x\eps}\omega)|U^\gamma(t,x,\tau_{\frac x\eps}\omega)|^{p(\tau_{\frac x\eps}\omega)-2}
U^\gamma(t,x,\tau_{\frac x\eps}\omega)Du^\eps(t,x)\,dxdt
$$
$$
+o_\delta(1) \big(|U^\gamma(t,x,\tau_{\frac x\eps}\omega)|^{p(\tau_{\frac x\eps}\omega)}
+|Du^\eps(t,x)|^{p(\tau_{\frac x\eps}\omega)}\big),
$$
where $o_\delta(1)$ tends to zero as $\delta\to0$.
Due to the a priory estimates for $Du^\eps$ and by the Birkhoff theorem, the last term on the right-hand side does not exceed
$o_\delta(1)$ for sufficiently small $\eps$.  Applying again the Birkhof theorem we conclude that the first term on the right-hand side  converges to the integral
$$
\int\limits_{Q_T}\int\limits_\Omega(1-p(\omega))|U^\gamma(t,x,\omega)|^{p(\omega)}dxdtd{\bf P}.
$$
Since $p(\cdot)|U^\gamma|^{p(\cdot)}U^\gamma$ can be used as a test function in the definition of two-scale convergence,
the second term on the right-hand side converges to the integral
$$
\int\limits_{Q_T}\int\limits_\Omega p(\omega)|U^\gamma(t,x,\omega)|^{p(\omega)-2}
U^\gamma(t,x,\omega)U(t,x,\omega)\,dxdtd{\bf P}
$$
Summarizing the above relations yields
$$
\liminf\limits_{\eps\to0}\int\limits_{Q_T} |Du^\eps(t,x)|^{p(\tau_{\frac x\eps}\omega)}\,dxdt\geq
\int\limits_{Q_T}\int\limits_\Omega(1-p(\omega))|U^\gamma(t,x,\omega)|^{p(\omega)}dxdtd{\bf P}.
$$
$$
+\int\limits_{Q_T}\int\limits_\Omega p(\omega)|U^\gamma(t,x,\omega)|^{p(\omega)-2}
U^\gamma(t,x,\omega)U(t,x,\omega)\,dxdtd{\bf P}+ o_\delta(1).
$$
Sending first $\delta\to0$ and choosing sufficiently small $\gamma>0$ we conclude that
$$
\int\limits_{Q_T}\int\limits_\Omega|U^\gamma(t,x,\omega)|^{p(\omega)}dxdtd{\bf P}\leq C
$$
with a constant $C$ that does not depend on $\gamma$. By the Fatou lemma this yields the desired
statement. Moreover, we have
$$
\liminf\limits_{\eps\to0}\int\limits_{Q_T} |Du^\eps(t,x)|^{p(\tau_{\frac x\eps}\omega)}\,dxdt\geq
\int\limits_{Q_T}\int\limits_\Omega|U(t,x,\omega)|^{p(\omega)}dxdtd{\bf P}.
$$
\end{proof}

\bigskip
The last Lemma implies that
\begin{equation}\label{elements}
Du\in L^f(Q_T),\quad u\in X^f(Q_T), \quad z_0\in L^{f^*}(Q_T).
\end{equation}
Indeed, by Lemma \ref{l_sol-2sca},
$$
\int_{Q_T}f(Du)\,dxdt= \int_{Q_T}\Big(\min\limits_{w\in \mathcal{G}(\Omega)}\,\int_\Omega|Du(x,t)+w(\omega)|^{p(\omega)}\,d\mP(\omega)\Big) dxdt
$$
$$
\leq \int_{Q_T}\Big(\int_\Omega|Du(x,t)+u_1(x,t,\omega)|^{p(\omega)}\,d\mP(\omega)\Big) dxdt<\infty.
$$
Similarly,
$$
\int_{Q_T}f^*(z_0)\,dxdt\leq \int_{Q_T}\int_\Omega|z(x,t,\omega)|^{p'(\omega)}\,d\mP(\omega)\Big) dxdt<\infty.
$$
It also follows from Lemma  \ref{l_sol-2sca} that
\begin{equation}\label{useful1}
\int_{t_1}^{t_2}\!\int_Gz_0 \cdot Du\,dxdt=\int_{t_1}^{t_2}\!\!\int_G\!\int_\Omega z (Du+u_1)\,dxdtd\mP(\omega).
\end{equation}
Our next goal is to pass to the limit in the viscous term in (\ref{n-s_ini}). To this end we take the difference between the relations
of items (i) and (iii) of Section \ref{ss_func-sp}. The resulting relation reads
$$
\begin{array}{l}
\displaystyle
\int\limits_{t_0}^{t_1}\int\limits_GA\Big(\frac{x}{\eps},Du^\eps\Big)\cdot Du^\eps\,dxdt
\leq
\int\limits_{t_0}^{t_1}\int\limits_G\Big[A\Big(\frac{x}{\eps},Du^\eps\Big)-u^\eps\otimes u^\eps\Big]\cdot\nabla\eta\,dxdt\\[4mm]
\displaystyle
-\int\limits_G\Big(\big[\frac{1}{2}|u^\eps(x, t_1)|^2-u^\eps(x,t_1)\cdot\eta(x)\big]-
\big[\frac{1}{2}|u^\eps(x, t_0)|^2-u^\eps(x,t_0)\cdot\eta(x)\big]\Big)\,dx.
\end{array}
$$
for any $\eta\in C_{0,{\rm sol}}^\infty(G)$.
Considering the relation
$$
\int_G\Big(\frac{1}{2}|u^\eps|^2-u^\eps\cdot\eta\Big)\,dx\Big|_{t=t_0}^{t_1}=\frac{1}{2}\int_G(|u^\eps-\eta|^2)\,dx\Big|_{t=t_0}^{t_1}
$$
and the symmetry of matrices $A$ and $u^\eps\otimes u^\eps$, we derive
$$
\begin{array}{rl}
\displaystyle
\int\limits_{t_0}^{t_1}\int\limits_GA\Big(\frac{x}{\eps},Du^\eps\Big)\cdot Du^\eps\,dxdt
\!&\!\leq \displaystyle
\int\limits_{t_0}^{t_1}\int\limits_G\Big[A\Big(\frac{x}{\eps},Du^\eps\Big)-u^\eps\otimes u^\eps\Big]\cdot D\eta\,dxdt\\[6mm]
&\displaystyle
+\frac{1}{2}\int_G|u^\eps(x, t_0)-\eta(x)|^2\,dx.
\end{array}
$$
Choosing $t_0$ in such a way that $u^\eps(\cdot,t_0)$ converges to $u(\cdot,t_0)$ in $L^2(G)$ and $u(\cdot,t_0)\in W_0^{1,\alpha}(G)$, and passing to
the limit $\eps\to 0$ yields
\begin{equation}\label{1st_limrel}
\begin{array}{rl}
\displaystyle
\lim\limits_{\eps\to0}\ \int\limits_{t_0}^{t_1}\int\limits_GA\Big(\frac{x}{\eps},Du^\eps\Big)\cdot Du^\eps\,dxdt
\!&\!\leq \displaystyle
\int\limits_{t_0}^{t_1}\int\limits_G[z_0-u\otimes u]\cdot D\eta\,dxdt\\[6mm]
&\displaystyle
+\frac{1}{2}\int_G|u(x, t_0)-\eta(x)|^2\,dx.
\end{array}
\end{equation}
We are going to show that $\eta=u(x,t_0)$ can be chosen as a test function in  the last inequality.
Let $\{\eta\big._N\}\big._{N=1}^\infty$ be a sequence of functions $\eta_N\in C_{0,{\rm sol}}^\infty(G)$ such that
$\eta_N\to\eta$ in $W^{1,\alpha}_0(G)$. We substitute $\eta_N$ for a test function in (\ref{1st_limrel}) and pass
to the limit, as $N\to\infty$.  It is clear that $\eta_N\to u(\cdot,t_0)$ in $L^2(G)$. Therefore, the last term on the right-hand side tends to zero.

Regarding the convection term by Lemma \ref{l_emb_imp} we have
$$
|u\otimes u|\in L^1((0,T),L^{\alpha}).
$$
Then
$$
\int\limits_{t_0}^{t_1}\int\limits_G(u\otimes u)\cdot D\eta_N\,dxdt=\int\limits_G\int\limits_{t_0}^{t_1}(u\otimes u)\,dt\cdot
D\eta_N\,dx\to \int\limits_G\int\limits_{t_0}^{t_1}(u\otimes u)\cdot Du(x,t_0)\,dxdt.
$$
By Lemma \ref{l_dens-sol}, the space $C_{0,{\rm sol}}^\infty(G)$ is dense in $W^{1,f}_0(G)$. Therefore, we can assume that $\eta_N$ converges to $u(\cdot,t_0)$ in $W^{1,f}_0(G)$. This yields
$$
\int\limits_{t_0}^{t_1}\int\limits_G z_0\cdot D\eta_N\,dxdt \longrightarrow \int\limits_{t_0}^{t_1}\int\limits_G z_0\cdot Du(x,t_0)\,dxdt ;
$$
here we used the fact that
$$
\int_{t_0}^{^t_1}z_0\,dt \,\in\, L^{f^*}(G).
$$
Letting $t_1=t_0+h$ and combining (\ref{1st_limrel}) with the above limit relations,  we obtain
$$
\begin{array}{rl}
\displaystyle
\lim\limits_{\eps\to 0}\frac{1}{h}\int\limits_{t_0}^{t_1}\int\limits_G A\Big(\frac{x}{\eps},Du^\eps\Big)\cdot Du^\eps\,dxdt
\!\!&\!\displaystyle
\leq
\frac{1}{h}\int\limits_{t_0}^{t_1}\int\limits_G[z_0-u\otimes u]\cdot Du(x,t_0)\,dxdt\\[5mm]
&\displaystyle
=\frac{1}{h}\int\limits_{t_0}^{t_1}\int\limits_Gz_0\cdot Du\,dxdt -R(h)
\end{array}
$$
with
$$
R(h)=\frac{1}{h}\int\limits_{t_0}^{t_0+h}\int\limits_G z_0\cdot(Du(x,t)-Du(x,t_0))\,dxdt-
\int\limits_G\frac{1}{h}\int\limits_{t_0}^{t_0+h}u\otimes u\,dt\cdot Du(x,t_0)\,dx.
$$
With the help of  (\ref{useful1}) we rearrange the last inequality as follows
\begin{equation}\label{vazhno}
\lim\limits_{\eps\to 0}\frac{1}{h}\int\limits_{t_0}^{t_1}\!\int\limits_G A\Big(\frac{x}{\eps},Du^\eps\Big)\cdot Du^\eps\,dxdt
\leq\frac{1}{h}\int\limits_{t_0}^{t_1}\!\int\limits_G\!\int\limits_\Omega \!z\cdot (Du+u_1)\,d\mP(\omega)dxdt- R(h).
\end{equation}
Due to monotonicity of $\mA(\omega,\xi)$,  for any $\Phi\in C_0^\infty(G, \mathcal{D}^\infty(\Omega,\mathbb R^{d(d+1)/2}))$ we have
$$
\frac{1}{h}\int\limits_{t_0}^{t_1}\int\limits_G \Big[A\Big(\frac{x}{\eps},Du^\eps\Big)-A\Big(\frac{x}{\eps},\Phi(x,\tau_{x/\eps}\omega)\Big)\Big]
\cdot [Du^\eps-\Phi(x,\tau_{x/\eps}\omega)]\,dxdt\geq0.
$$
We pass to the limit, as $\eps\to0$, in this relation. The term with the integrand $A(\eps^{-1}x,Du^\eps)\cdot Du^\eps$ has been estimated in (\ref{vazhno}).
In other three terms we pass to the two-scale limit. This yields
$$
\frac{1}{h}\int\limits_{t_0}^{t_1}\int\limits_G\int\limits_\Omega [z-\mA(\omega,\Phi(x,\omega))]\cdot[Du+u_1-\Phi(x,\omega)]\,d\mP(\omega)dxdt
\geq    R(h).
$$
For an arbitrary Lebesgue point $t_0$ of  functions $z(\cdot,t,\cdot)\cdot Du(\cdot, t)$ and $z(\cdot,t,\cdot)\cdot u_1(\cdot, t,\cdot)$ the left-hand side
of the last inequality converges as $f\to 0$ to the following integral
$$
\int\limits_G\int\limits_\Omega[z(x,t_0,\omega)-\mA(\omega,\Phi(x,\omega))]\cdot[Du(x,t_0)+u_1(x,t_0,\omega)-\Phi(x,\omega)]\,d\mP(\omega)dx.
$$
It is also easy to check that both integrals in the definition of $R(h)$ tend to zero, as $h\to 0$. Therefore,
$$
\int\limits_G\int\limits_\Omega[z(x,t_0,\omega)-\mA(\omega,\Phi(x,\omega))]\cdot[Du(x,t_0)+u_1(x,t_0,\omega)-\Phi(x,\omega)]\,d\mP(\omega)dx\geq 0
$$
for any test function $\Phi$. By the standard Minty's arguments
$$
z(x,t_0,\omega)=\mA\big(\omega,Du(x,t_0)+u_1(x,t_0,\omega)\big).
$$
By Lemma \ref{l_sol-2sca}  we have $z(x,t_0,\cdot)\in \mathcal{G}^\perp(\Omega)$. Therefore,
$$
\int_\Omega\mA\big(\omega,Du(x,t_0)+u_1(x,t_0,\omega)\big)\cdot v(\omega)\,d\mP(\omega)=0
$$
for any $v\in \mathcal{G}(\Omega)$, and thus $u_1(x,t_0,\omega)$ is a solution of problem (\ref{cellpbm}) with
$\xi=Du(x,t_0)$. We then conclude that
$$
z_0(x,t_0)=\int_\Omega z(x,t_0,\omega)(\omega)\,d\mP(\omega)=\int_\Omega \mA\big(\omega,Du(x,t_0)+u_1(x,t_0,\omega)\big)\,d\mP(\omega)
$$
$$
=A^{\rm eff}(Du(x,t_0)).
$$
This completes the proof of Theorem \ref{t_main-homo}.

\section{Examples}

In this section we consider examples of random diffusion tensors $A(x,\xi)$.

\bigskip\noindent
{{\bf Example 1. \ \   Voronoi-Poisson tesselation model}.

\medskip\noindent
Consider a Poisson point process in $\mathbb R^d$ with intensity $1$, and construct the Voronoi tessellation (diagram) for this point process. It is known (see \cite{DaVe}) that a.s. the said Voronoi tessellation consists of a countable number of convex polytopes,
we denote them $H_1,\,H_2,\ldots$.  Moreover, the polytopes can be enumerated in such a way that the characteristic function
${\bf 1}_{H_j}(y)$ is a $\mathcal{B}\times\mathcal{F}$-measurable  function of $y$ and $\omega$ for any $j=1,2,\ldots$

Let $\eta_1,\, \eta_2\,\ldots$ be a family of i.i.d. random variable taking on values in $[\alpha,\beta]$ with
$\alpha_0(d)\leq\alpha<\beta\leq\alpha^*$.   We then set
$$
{\tt p}(y)=\sum\limits_{j=1}^\infty \eta_j{\bf 1}_{H_j}(y),\qquad A(y,\xi) = |\xi|^{{\tt p}(y)-1}, \ \
\xi\in\mathbb R^{\frac{d(d+1)}{2}}.
$$
This diffusion matrix $A=A(y,\xi)$ satisfies all the conditions of Theorem \ref{t_main-homo}.

Let $(\Omega,\mathcal{F},{\bf P})$ be the underlying probability space with an ergodic  dynamical system $\tau_y$ such that
${\tt p}(y)=p(\tau_y\omega)$ with  $p(\omega)={\tt p}(0)$.
Problem \eqref{cellpbm} then takes the form: \ \ find
$v\in \mathcal{G}(\Omega)$ such that
$$
\int\limits_\Omega |\xi+v(\omega)|^{p(\omega)-1}\theta(\omega)\,d{\bf P}
$$
for any $\theta\in \mathcal{G}(\Omega)$.

\bigskip\noindent
Taking the convolution of ${\tt p}$ with a $C_0^\infty(\mathbb R^d)$ even function $\varphi=\varphi(y)$ such that $\varphi\geq 0$
and $\int_{\mathbb R^d}\varphi\,dy=1$, denoting the obtained function by $\widehat{\tt p}$ and letting
$A(\omega,\xi)=|\xi|^{\widehat p(\omega)}, \ \ \widehat p(\omega)=\widehat{\tt p}(0)$, we define a diffusion matrix
$A(\omega,\xi)$ with a.s. continuous in $y$ realizations.

\bigskip\noindent
{\bf Example 2. Bernoulli percolation model}.

\medskip\noindent
Consider a checker board in $\mathbb R^d$ with the cell $[0,1)^d$. We associate to each cell a random variable that takes on the value $1$ with probability $q$ and the value $0$ with probability $1-q$, and assume that these random variables are i.i.d.
We denote these random variables by $\zeta_j$, $j\in \mathbb Z^d$, and the corresponding cells by $Q_j$, so that
$Q_j=[0,1)^d+j$. It is known (see \cite{Kes}) that there is a ${\bf p}_{\rm cr}$, $0< {\bf p}_{\rm cr}<1$, such that for $q>{\bf p}_{\rm cr}$ the set $\{\bigcup\limits_j Q_j\,:\, \zeta_j=1\}$ a.s. has a unique unbounded connected component, the so-called infinite cluster. We denote it by $\mathcal{C}$, and introduce the following two random functions:
$$
{\tt p}(y)=\alpha +(\beta-\alpha){\bf 1}_{\mathcal C}(y),\qquad a(y)=1+\sum\limits_{j\in\mathbb Z^d}\zeta_j{\bf 1}_{Q_j}(y)
$$
with $\alpha_0(d)\leq\alpha<\beta\leq\alpha^*$. Then
$$
A(y,\xi)=a(y)|\xi|^{\tt p(y)}, \quad\xi\in \mathbb R^{d(d+1)/2},
$$
is an admissible diffusion matrix.

\bigskip\noindent
{\bf Acknowledgements.}
The essential part of this work was done at A. and N. Stoletov Vladimir State University.
The work of both authors was supported by the Russian Science (Research?) Fund, Project 14-11-00398.

\end{document}